\definecolor{darkgreen}{rgb}{0.0, 0.4, 0.0}
\definecolor{cyan}{cmyk}{1,0,0,0}
\newcommand{\bdg}{\begin{dg}}
\newcommand{\edg}{\end{dg}}
\newcommand{\cre}{\color{red}}
\newtheorem{tm}{Theorem}[subsection]
\newtheorem{lm}[tm]{Lemma}
\newtheorem{pr}[tm]{Proposition}
\newtheorem{rmk}[tm]{Remark}
\newtheorem{cor}[tm]{Corollary}
\newtheorem{fact}[tm]{Fact}
\newtheorem{??}[tm]{Question}
\newtheorem{defi}[tm]{Definition}
\newtheorem{choice}[tm]{Choice}
\newcommand{\ben}{\begin{enumerate}}
\newcommand{\een}{\end{enumerate}}
\newcommand{\bit}{\begin{itemize}}
\newcommand{\eit}{\end{itemize}}
\newcommand{\beq}{\begin{equation}}
\newcommand{\eeq}{\end{equation}}
\newcommand{\la}{\label}
\newcommand\ci{\cite}
\font\tenmsb=msbm10
\font\sevenmsb=msbm7
\font\fivemsb=msbm5
\def\Bbb#1{{\fam\msbfam #1}}
\font\teneufm=eufm10
\font\seveneufm=eufm7
\font\fiveeufm=eufm5
\def\frak#1{{\fam\eufmfam\relax#1}}
\newcommand{\lorw}{\longrightarrow}
\newcommand\rat{{\Bbb Q}}
\newcommand\comp{{\Bbb C}}
\newcommand\zed{{\Bbb Z}}
\newcommand{\pH}{\mathop{^{p}\mathcal{H}}\nolimits}
\newcommand\s{\sigma}
\newcommand\e{\epsilon}
\newcommand{\w}[1]{\widetilde{#1}}
\newcommand{\m}[1]{\mathcal{#1}}
\newcommand{\ms}[1]{\mathscr{#1}}
\newcommand{\ptd}[1]{ \,^{p}\!\tau_{ \leq {#1} } }
\newcommand{\ptu}[1]{ \,^{p}\!\tau_{ \geq {#1} } }
\newcommand{\ph}{^p\!H}
\newcommand{\Gm}{{\mathbb G}_m}
\newcommand{\disk}{\Delta}
\newcommand{\mo}{\ms{X}}
\title{The perverse  filtration for the Hitchin fibration is  locally constant}
\author{
Mark Andrea A.  de Cataldo, 
Davesh Maulik.
}
\address{Mark Andrea A. de Cataldo, Stony Brook University}
\email{mark.decataldo@stonybrook.edu}
\address{Davesh Maulik, Massachusetts Institute of Technology}
\email{maulik@mit.edu}
\begin{document}

\begin{abstract}
We prove that the perverse Leray filtration for the Hitchin morphism is locally constant in families, thus providing some evidence towards
the validity of the $P=W$ conjecture due to de Cataldo, Hausel and Migliorini in non Abelian Hodge theory.
\end{abstract}

\maketitle

\tableofcontents

$\,$
\section{Introduction}\la{intro}

Let $S$ be an algebraic variety, and let $f:X\to Y$ be an $S$-morphism of algebraic varieties.  Let $F$ be a constructible complex
of rational vector spaces on $X$.  For each $s \in S$, the vector spaces
$H^\bullet(X_s, F_s)$ carry the perverse Leray filtrations  $P^{f_s}$ associated with the morphism $f_s:X_s\to Y_s$.
It is natural to ask how these filtered vector spaces
$(H^\bullet(X_s, F_s),P^{f_s})$ behave as $s$ varies in $S$.   
The main goal of this paper is to provide criteria (Theorem \ref{finit}) for checking local constancy of these filtrations.

\subsection{Perverse filtration for the Hitchin morphism}\la{dffd}$\;$

Our motivating example arises from perverse filtrations associated to the Hitchin morphism for the moduli space of Higgs bundles for a smooth projective variety.  We refer to \S\ref{snaht} for more details on what follows.

Let $\mo \to S$ be a smooth projective morphism with connected fibers over a connected quasi projective variety, and let $G$ be a reductive group.
Associated to this family is the so-called Dolbeault moduli space $\pi_D=\pi_D(\mo/S,G): M_D(\mo/S,G) \to S$ and the  Hitchin projective  $S$-morphism $h=h(\mo/S,G): M_D(\mo/S,G)\to \m{V}(\mo/S,G)$, which induces, for every point $s\in S$,  the Hitchin morphism
$h_s=h(\mo_s,G): M_D(\mo_s,G)\to \m{V}(\mo_s,G)$.  

It follows from non-abelian Hodge theory that the 
the structural morphism $\pi_D$ is topologically locally trivial over $S$.
It follows that the intersection cohomology groups  $I\!H^\bullet (M_D(\mo_s,G), \rat)$ on the Dolbeault moduli spaces 
$M_D(\mo_s,G)$,
give rise to  local systems $I\!H^\bullet_D(\mo/S,G)$ on $S$.

For every $s\in S$, the Hitchin morphism $h_s$ induces the perverse Leray filtration
$P^{h_s}_{D,\star}  (\mo_s,G)$ on the intersection cohomology groups $I\!H^\bullet (M_D(\mo_s,G), \rat)$. 

It does not seem immediately clear that
these perverse Leray filtrations should match-up and give rise to locally constant  subsheaves of the local systems 
$I\!H^\bullet_D(\mo/S,G)$ on $S$.
A priori, the perverse filtrations could jump at special point in $S$.  For example, a priori, one could have un-expected direct summands in $D(\m{V}(\mo_s,G))$ in the decomposition theorem 
for the Hitchin morphism  $h(\mo_s,G)$ at  special  points $s \in S$. For a brief discussion, see the beginning of \S\ref{prep}.

In this paper, we prove the following result, which can be viewed as some evidence towards the validity of the $P=W$ Question and Conjecture, see Question
\ref{4r4r} and Remark \ref{p=wdd}: the weight filtration on the intersection cohomology of the Betti moduli spaces
of a family of projective manifolds gives rise to locally constant sheaves on the base of the family; if $P=W$, then the same would be true
for the corresponding perverse Leray filtrations associated with the family of Hitchin morphisms, and this is what the following theorem
asserts.

\begin{tm}\la{ewq0} {\rm  ({\bf The perverse Leray filtration  is locally constant})}
The perverse leray filtration gives rise to locally constant subsheaves:
\beq\la{wq}
P^h_{D,\star}  (\mo/S,G) \subseteq I\!H^\bullet_D(\mo/S,G).
\eeq
\end{tm}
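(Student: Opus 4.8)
The plan is to obtain the theorem as an application of the general local-constancy criterion of Theorem \ref{finit}, taking for $f$ the Hitchin morphism $h\colon M_D(\mo/S,G)\to \m{V}(\mo/S,G)$ over $S$ and for $F$ the intersection complex $IC_{M_D(\mo/S,G)}$ of the total Dolbeault space. Since the assertion \eqref{wq} is local on $S$, I would first replace $S$ by a contractible (polydisk) neighbourhood of an arbitrary point; over such an $S$ all local systems are trivial, so it suffices to show that, under a fixed topological trivialization of $\pi_D$, the perverse Leray subspace $P^{h_s}_{D,\star}(\mo_s,G)\subseteq I\!H^\bullet(M_D(\mo_s,G),\rat)$ is independent of $s\in S$.

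The hypotheses of Theorem \ref{finit} to be verified for this $f$ and $F$ are, in substance: (i) $h$ is proper over $S$ — indeed projective, by construction of the Hitchin morphism; (ii) the Hitchin base $\m{V}(\mo/S,G)$ is relatively quasi-projective over $S$, so that a relative projective embedding and the de Cataldo--Migliorini description of the perverse Leray filtration by generic relative linear sections are available; (iii) $F = IC_{M_D(\mo/S,G)}$ is locally constant along $S$ in the relevant sense, i.e. $R(\pi_D)_* IC$ has locally constant cohomology and, over a contractible $S$, is the constant complex on $S$ with stalk $R\Gamma(M_D(\mo_0,G),IC)$. Point (iii) is exactly where non-abelian Hodge theory enters: since $\pi_D$ is topologically locally trivial over $S$, over a contractible $S$ the pair $(M_D(\mo/S,G),IC)$ is, up to homeomorphism over $S$, a product $(M_D(\mo_0,G),IC)\times S$, which yields (iii) and re-derives that $I\!H^\bullet_D(\mo/S,G)$ is a trivial local system there. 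Thus Theorem \ref{ewq0} reduces to checking (i)--(iii), which are structural features of the Hitchin fibration together with NAHT.

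For completeness I would indicate the mechanism behind such a criterion, as it is what makes the reduction meaningful. The key is to choose a Whitney stratification $\Sigma$ of the total base $\m{V}(\mo/S,G)$ adapted to $Rh_* IC$ \emph{and} with strata submersive over $S$ (possible after shrinking $S$, by relative Thom--Mather stratification theory), and then, for a projective embedding over $S$, a relative complete flag of linear sections of $\m{V}(\mo/S,G)$ transverse to $\Sigma$ (possible by a relative Bertini argument). Such a relatively generic flag restricts in each fibre $s$ to a flag generic for the fibrewise stratification $\Sigma\cap \m{V}(\mo_s,G)$, and hence computes $P^{h_s}_{D,\star}(\mo_s,G)$ as the kernel of a restriction map from $I\!H^\bullet(M_D(\mo_s,G),\rat)$ to the intersection cohomology of the preimage under $h_s$ of the appropriate flag member (the index of the member depending linearly on the perverse and cohomological degrees). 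Thom's first and second isotopy lemmas then trivialize over $S$ the Dolbeault space together with all these preimages compatibly, so each perverse Leray subspace is carried to a constant subspace as $s$ varies; this gives the locally constant subsheaves \eqref{wq}.

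The step I expect to be the main obstacle is the construction in the previous paragraph: producing a single stratification of $\m{V}(\mo/S,G)$ that is at once fine enough to be adapted to the decomposition-theorem data for the total Hitchin map and compatible (submersive) with the projection to $S$, and a relative flag generic for it, in such a way that genericity in the total space forces genericity in \emph{every} individual fibre and not merely the general one — the a priori possibility of extra summands in $D(\m{V}(\mo_s,G))$ at special $s$ is precisely what this must rule out. A secondary point requiring care is the structure of the Hitchin base over $S$ in the higher-dimensional case (its relative quasi-projectivity, and if needed its scheme structure), which is what feeds into hypothesis (ii).
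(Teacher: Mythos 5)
Your top-level plan --- apply Theorem \ref{finit} to $f=h$ and $F=\m{IC}$ of the Dolbeault space, with non-abelian Hodge theory supplying topological local triviality of $\pi_D$ over $S$ --- is the paper's entry point. But the hypotheses you propose to verify are not the hypotheses of Theorem \ref{finit}, and the two that actually do the work are missing. First, every case of Theorem \ref{finit} requires $\phi F=0$; in the paper this is Lemma \ref{vaicdm}, proved from the product structure over a disk (the intersection complex over the disk is pulled back from a fiber), and it is what feeds Lemma \ref{silm} and Proposition \ref{iotanoc}/Corollary \ref{dodo}, i.e.\ the commutation of perverse truncation with shifted restriction to the special fiber --- exactly the point where unexpected summands supported over $\m{V}(\mo_s,G)$ could spoil everything. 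Second, case (ii) requires local constancy not of $\rho_*h_*\m{IC}=\pi_*\m{IC}$ but of the truncated images $\rho_*\ptd{\bullet}h_*\m{IC}$; the paper obtains this from the decomposition theorem (Theorem \ref{dt}): $\ptd{\bullet}h_*\m{IC}$ is a direct summand of $h_*\m{IC}$, so $\rho_*\ptd{\bullet}h_*\m{IC}$ is a summand of the locally constant $\pi_*\m{IC}$ (alternatively, case (iii) uses semisimplicity of $\m{IC}$ together with $\phi\,\m{IC}=0$). Your hypothesis (ii), relative quasi-projectivity of the Hitchin base so as to run a flag/linear-section description of the perverse filtration, is not a hypothesis of Theorem \ref{finit} at all; it belongs to a different strategy.

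That different strategy --- your ``mechanism'' paragraph: a Whitney stratification of $\m{V}(\mo/S,G)$ adapted to $h_*\m{IC}$ with strata submersive over $S$, relatively generic flags, and Thom isotopy --- is where the genuine gap lies, and you flag it yourself. If $h_*\m{IC}$ had a constituent supported over a special fiber $\m{V}(\mo_s,G)$, no adapted stratification could have all strata submersive over $S$ near $s$, and ``shrinking $S$'' discards precisely the special points the theorem is about; fibrewise genericity of a relatively generic flag at \emph{every} $s$, rather than at general $s$, is essentially equivalent to the non-jumping statement being proved, and the proposal offers no way to rule out the bad summands. The paper's criterion circumvents this through the nearby/vanishing-cycle formalism: $\phi\,\m{IC}=0$ forces no constituents on the special fiber, so truncation commutes with $i^*[-1]$ (Corollary \ref{dodo}), and local constancy of $\rho_*\ptd{\bullet}h_*\m{IC}$ makes the base-change arrows in diagram (\ref{arce}) isomorphisms, whence the specialization morphism is a filtered isomorphism (Theorem \ref{ewq}); local constancy over a general base then follows by comparing any two points with a general one. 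So: same entry point, but the substantive verification is either absent (the $\phi=0$ and truncated-direct-image conditions) or replaced by an argument that does not close the key difficulty.
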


Theorem \ref{ewq0} follows at once from Theorem \ref{ewq} below, to the effect that specialization is a filtered isomorphism
for the  perverse Leray filtrations. 

\begin{tm}\la{ewq} Let $S$ be a nonsingular connected curve, let $s\in S$ be any point, let $\disk$ be a suitably general disk
centered at $s$ and let $t \in \disk \setminus \{s\}$ be a suitably general point.
The specialization morphism $I\!H^\bullet (M_{D}(\mo_s,G),\rat)) \to I\!H^\bullet(M_{D}(\mo_t,G),\rat))$ is
well-defined and gives a filtered isomorphism for the respective perverse Leray filtrations $P^{h_s}_D(\mo_s,G)$ and $P^{h_t}_D(\mo_t,G).$ 
In particular, the dimensions
of the graded spaces ${\rm Gr}^{P^{h_s}_D}_\star I\!H^\bullet (M_{D}(\mo_s,G),\rat))$ are independent of $s \in S$. 
\end{tm}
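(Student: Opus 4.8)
The plan is to prove the theorem from the description of the perverse Leray filtration by general flags of linear sections, using non-abelian Hodge theory only to trivialize the ambient moduli space; I expect this to be the instance of the general criterion of Theorem \ref{finit} relevant here, and I indicate the argument directly. By non-abelian Hodge theory $\pi_D$ is topologically locally trivial over $S$, so for a sufficiently small disk $\disk$ the family $M_D(\mo_\disk,G)\to\disk$ is a locally trivial fibre bundle; hence the groups $I\!H^\bullet(M_D(\mo_\bullet,G),\rat)$ are the fibres of the local system $I\!H^\bullet_D(\mo/S,G)$ over $\disk$, the specialization morphism in the statement is the corresponding parallel transport (so it is well defined and is an isomorphism of vector spaces, independently of all further choices), and $I\!H^\bullet(M_D(\mo_\disk,G),\rat)$, the hypercohomology of $IC_{M_D(\mo_\disk,G)}$, restricts isomorphically, up to a shift, onto $I\!H^\bullet(M_D(\mo_s,G),\rat)$ and onto $I\!H^\bullet(M_D(\mo_t,G),\rat)$. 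The only remaining point is that this isomorphism is \emph{filtered}, and for that I would use the theorem of de Cataldo and Migliorini computing the perverse Leray filtration by general flags: since $h_s$ is projective and the Hitchin base $\m{V}(\mo_s,G)$ is affine, for a sufficiently general decreasing flag of linear sections $\m{V}(\mo_s,G)=\Lambda_{0,s}\supseteq\Lambda_{1,s}\supseteq\cdots$ with ${\rm codim}\,\Lambda_{j,s}=j$, the step $P^{h_s}_{D,p}\,I\!H^k(M_D(\mo_s,G),\rat)$ is the kernel of the restriction map to $\mathbb{H}^k\!\big(h_s^{-1}(\Lambda_{j,s}),\,IC_{M_D(\mo_s,G)}|\big)$ for the index $j=j(p,k)$, and likewise for $h_t$.

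Next I would spread the flag out over $\disk$. After shrinking, the Hitchin bases form a trivial vector bundle over $\disk$; let $G_{j,\disk}\to\disk$ be the resulting trivial bundle parametrizing codimension-$j$ linear sections of the fibres, and let $\m{Z}_j\to G_{j,\disk}$ be the universal family of preimages $h_\disk^{-1}(\lambda)$, a projective morphism carrying the constructible complex $IC_{M_D(\mo_\disk,G)}|$. Stratifying compatibly and applying Thom's first isotopy lemma, there is a Zariski-dense open $G_{j,\disk}^{\circ}\subseteq G_{j,\disk}$ over which $\m{Z}_j$, together with this complex, is topologically locally trivial. The key point is that $G_{j,\disk}^{\circ}$ is open in $G_{j,\disk}$ and meets the fibre over $s$ in a dense open subset: choosing in that fibre a flag $\Lambda_{\bullet,s}$ general enough to lie in $G_{j,\disk}^{\circ}$ for every $j$ and to be sufficiently general for $h_s$ in the sense of de Cataldo--Migliorini, and extending it to the constant flag $\Lambda_{\bullet,\disk}$ over $\disk$, the resulting section of $G_{j,\disk}\to\disk$ lands in $G_{j,\disk}^{\circ}$ over a neighbourhood of $s$. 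After a further shrinking of $\disk$ --- this is the \emph{suitably general disk}, also taken small enough that small disks compute stalks of constructible complexes --- the families $Z_{j,\disk}:=h_\disk^{-1}(\Lambda_{j,\disk})\to\disk$ become topologically locally trivial over \emph{all} of $\disk$, so the pushforward to the contractible $\disk$ of $IC_{M_D(\mo_\disk,G)}|_{Z_{j,\disk}}$ is a local system. Consequently $\mathbb{H}^\bullet(Z_{j,\disk},IC|)$ restricts isomorphically onto $\mathbb{H}^\bullet(Z_{j,s},IC|)$ and onto $\mathbb{H}^\bullet(Z_{j,t},IC|)$, compatibly with the restrictions from $M_D(\mo_\disk,G)$, for every $t\in\disk$; choosing $t$ moreover in the dense locus where the flag stays sufficiently general for $h_t$ gives the \emph{suitably general point}.

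It remains to assemble these facts. For each $j$ and $k$ the restriction map $I\!H^k(M_D(\mo_\disk,G),\rat)\to\mathbb{H}^k(Z_{j,\disk},IC|)$ is the group of global sections of a morphism of local systems on $\disk$; since parallel transport intertwines such a morphism with its fibres, the specialization isomorphism carries $\ker\!\big(I\!H^k(M_D(\mo_s,G))\to\mathbb{H}^k(Z_{j,s},IC|)\big)$ isomorphically onto $\ker\!\big(I\!H^k(M_D(\mo_t,G))\to\mathbb{H}^k(Z_{j,t},IC|)\big)$, and by the flag description these are exactly $P^{h_s}_{D,p}$ and $P^{h_t}_{D,p}$. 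Hence the specialization morphism is a filtered isomorphism; and since from any point of the connected curve $S$ a suitably general point of a suitably general disk realizes the generic value of $\dim{\rm Gr}^{P^{h_\bullet}_D}_\star I\!H^\bullet$, these dimensions are independent of $s$. I expect the principal difficulty to be the claim of the second paragraph that the families $Z_{j,\disk}\to\disk$ become topologically locally trivial over the \emph{whole} disk, central fibre included --- a parametrized Bertini-type statement which does \emph{not} follow from non-abelian Hodge theory, since passing to linear sections of the Hitchin base is invisible to the non-abelian Hodge homeomorphism, so that the local constancy of the Hitchin-dependent perverse filtration must instead be extracted from Thom--Mather stratification theory. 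A secondary, bookkeeping difficulty is to arrange that one and the same spread-out flag is simultaneously sufficiently general at $s$ and at the chosen nearby point $t$.
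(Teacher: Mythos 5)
Your reduction of the problem to the filtered statement is fine: well-definedness of the specialization map and the unfiltered isomorphism do follow from non-abelian Hodge theory and topological invariance of intersection cohomology, exactly as in Lemma \ref{vaicdm}. But the step you yourself flag as the principal difficulty --- that the families $Z_{j,\disk}=h_\disk^{-1}(\Lambda_{j,\disk})\to\disk$ become topologically locally trivial over the \emph{whole} disk, central fibre included --- is a genuine gap, and it is the crux of the theorem, not a technicality. Thom--Mather theory gives local triviality only over a dense open subset of the parameter space, and that subset may a priori exclude the special point $s$: the relevant genericity is genericity in the total space of the flag bundle over $\disk$ (or over $S$), and the non-generic locus could contain the \emph{entire} fibre over $s$, for instance if $h_s$ itself is more degenerate than $h_t$ for nearby $t$. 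Choosing the flag general within the fibre over $s$ does not rule this out. This is precisely the phenomenon the theorem must exclude --- the possibility of extra summands in the decomposition theorem for $h_s$ at special $s$, causing the perverse filtration to jump --- so at this point your argument essentially assumes what is to be proved. Note also that the non-abelian Hodge trivialization of $M_D(\mo_\disk,G)$ does not intertwine the Hitchin maps, so it gives you no control over the subfamilies $Z_{j,\disk}$.

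The paper closes this gap by a different mechanism. It verifies hypothesis (ii) of Theorem \ref{finit} for $(X,Y,S,f,v,F)=(M,\m{V},S,h,\rho,\m{IC}_M)$: the decomposition theorem for the projective morphism $h$ shows that each truncation $\ptd{\bullet}h_*\m{IC}_M$ is a direct summand of $h_*\m{IC}_M$, hence $\rho_*\ptd{\bullet}h_*\m{IC}_M$ is a direct summand of $\pi_*\m{IC}_M$, which has locally constant cohomology sheaves by the non-abelian Hodge local triviality; a direct summand of such a complex again has locally constant cohomology sheaves. Theorem \ref{finit}(ii) then produces the filtered isomorphism via the vanishing-cycle formalism, with the commutation of perverse truncation and shifted restriction to the special fibre supplied by Proposition \ref{iotanoc} and Corollary \ref{dodo} (using $\phi\,\m{IC}_M=0$, i.e.\ no constituents supported on the special fibre). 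If you want to salvage your flag-based approach, you would in effect have to reprove this local constancy of the truncated pushforwards, at which point the paper's route is the shorter one; as written, your proof is incomplete at its central step.
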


\begin{rmk}\la{kkq}
Theorems \ref{ewq0} and  \ref{ewq} hold, with the same proof, if we replace the Dolbeault moduli spaces with their twisted counterparts
(cf. Remark \ref{p=wdd}) in the cases
of relative dimension $\dim \mo/S=1$ and $G=GL_n, SL_n, PGL_n$.  In these cases the Dolbeault moduli spaces are integral
orbifolds, even nonsingular in the cases $G=GL_n, SL_n$. 
\end{rmk}

We may say, informally, that the perverse Leray filtration for the Hitchin morphism associated with a family of projective manifolds
is independent of the members of the family. In particular, by applying this to curves and their moduli space, we may say that
the perverse Leray filtration on the intersection cohomology of the Dolbeault moduli space associated with the curve
and the reductive group $G$  is independent of the curve.

Theorem  \ref{ewq} concerning the Hitchin morphism is proved in \S\ref{gigi} as a corollary to the more general Theorem \ref{finit}, 
which provides  a sufficient condition for when the specialization morphism is well-defined
and is a filtered isomorphism.

$\,$
\subsection{Motivation from Gopakumar-Vafa invariants}\la{more}$\;$

One motivation for this work is to help understand a conjecture on Gopakumar-Vafa invariants
proposed in \cite{MT}, following earlier work of Kiem-Li \cite{kiem-li}.  In this section,
we briefly sketch this connection.

Let $X$ denote a Calabi-Yau threefold and let $\beta \in H_2(X,\mathbb{Z})$ denote a curve class on $X$.
There are various approaches to defining virtual counts of curves on $X$ in class $\beta$ -- Gromov-Witten or Pandharipande-Thomas invariants, for example; 
While these typically produce an infinite series of invariants, a physics proposal of Gopakumar-Vafa suggests that they should be governed 
by finitely many integers determined from the cohomology of a space of one-dimensional sheaves.

In \cite{MT}, the authors suggest a rigorous approach to their proposal using the perverse filtration as follows.
Consider the moduli space $M_\beta(X)$ of stable one-dimensional sheaves on $X$ with support cycle class $\beta$ and
Euler characteristic $1$, and let
$$\pi: M_\beta(X) \rightarrow \mathrm{Chow}_\beta(X)$$
denote the Hilbert-Chow morphism to the Chow variety of $X$ which remembers the support cycle.
\footnote{One needs to pass to seminormalizations to define this.}
Under certain assumptions, $M_\beta(X)$ carries a natural perverse sheaf $\phi_M$ associated to its shifted symplectic structure,
and one can study the perverse cohomology sheaves of its pushforward.  
More precisely, in \cite{MT}, the authors define Gopakumar-Vafa invariants $n_{g,\beta}$ of $X$ in class $\beta$ via the identity
\begin{align}
\sum_{i \in \mathbb{Z}} 
\chi(\pH^i(R \pi_{\ast}\phi_M))y^i=
\sum_{g\geq 0}n_{g, \beta}(y^{\frac{1}{2}}+y^{-\frac{1}{2}})^{2g}. 
\end{align}
Conjecturally, these GV invariants should, after a universal change of variables, agree with the Gromov-Witten invariants of $X$ in class $\beta$.  Since the precise relationship is somewhat intricate, we refer the
reader there for details.

In order for this picture to be plausible, it is necessary for GV invariants to be invariant under deformations of the Calabi-Yau threefold $X$ -- indeed such deformation invariance is built into the intersection-theoretic constructions of GW/PT invariants.  One can view the main theorem of this paper as evidence 
for this invariance; for example, one consequence of our main result is the deformation invariance of $n_{g,\beta}$ for local del Pezzo surfaces.
While there is no discussion of the sheaf $\phi_M$ in this paper, one might hope to extend the technique here to this more general setting.  

\subsection{Outline}\la{otline}$\;$

\S\ref{notation} sets up the notation. More precisely: \S\ref{gennot} sets up the notation in the context of the constructible derived category;
\S\ref{vncf} is concerned with the formalism of the vanishing/nearby cycles functors and of the specialization morphism.

\S\ref{prep} deals with the perverse Leray  filtration and specialization: \S\ref{cdte} contains Proposition
\ref{iotanoc}, which provides a  sufficient condition for when perverse truncation and (shifted) restriction to a Cartier divisor commute;
\S\ref{cosppf} contains our main technical result,  the aforementioned Theorem \ref{finit}

\S\ref{rrr} contains our main application to the Hitchin morphism: \S\ref{snaht} is a refresher on Dolbeault and Betti moduli spaces
and also proves some preliminary facts needed in the proof, given in   \S\ref{gigi},  of  Theorem \ref{ewq}, which in turn implies
Theorem \ref{ewq0} on the local constancy of the perverse Leray filtration for the Hitchin morphism
over a base.

\subsection{Acknowledgements}
We thank: Jochen Heinloth, Luca Migliorini,  J\"org Sch\"urmann, Geordie Williamson, Zhiwei Yun
for useful suggestions. The first-named author, who is partially supported by N.S.F. D.M.S. Grant n. 1600515, would like to thank the Freiburg Research Institute for Advanced Studies  for the perfect working conditions; the research leading to these results has received funding from the People Programme (Marie Curie Actions) of the European Union's Seventh Framework Programme
(FP7/2007-2013) under REA grant agreement n. [609305].  The second-named author is supported by NSF FRG grant DMS 1159265.

$\,$
\section{Notation}\la{notation}

\subsection{General notation}\la{gennot}$\;$

By variety, we mean a separated scheme of finite type over the field of complex numbers
$\comp$.  By point, we mean a closed point. See \ci{bams} for a quick introduction and for standard references for many of the
concepts and objects in this subsection.

Given a variety $Y$, we denote by $D(Y)$ the constructible bounded derived category of  sheaves of rational vector spaces on $Y,$
endowed with the middle perversity $\frak{t}$-structure.  We denote derived functors using the un-derived notation, e.g. if $f:X\to Y$
is a morphism of varieties, then the derived direct image (push-forward) functor  $Rf_*$ is denoted by $f_*$, etc.
Distinguished triangles  in $D(Y)$ are denoted $G' \to G \to G'' \rightsquigarrow$. 
The full subcategory of perverse sheaves is denoted by $P(Y)$.
We employ  the following standard notation for the objects associated with this $\frak{t}$-structure:
the full subcategories
$ ^p\!D^{\leq j}(Y)$ and ${^p\!D}^{\geq j}(Y)$, $\forall j \in \zed$, of $D(Y)$, and
${^p\!D}^{[j,k]}(Y): =    {^p\!D}^{\geq j}(Y) \cap    {^p\!D}^{\leq k}(Y),$ $\forall j\leq k \in \zed$; the truncation functors 
$\ptd{j}: D(Y) \to  {^p\!D}^{\leq j}(Y)$ and  $\ptu{j}: D(Y) \to {^p\!D}^{\geq j}(Y)$;  the perverse cohomology functors
$\ph^j: D(Y) \to P(Y)$. At times, we drop the space variable $Y$ from the notation.


The following operations preserve constructibility of complexes:
ordinary and extraordinary push-forward and pull-backs, hom and tensor product,
Verdier duality.    The nearby and vanishing cycle functors also preserves constructibility, with the provision
that, when dealing with these functors, one is working over a disk.

The $k$-th  (hyper)cohomology
groups of $Y$ with coefficients in $G \in D(Y)$ are denoted by $H^k(Y,G)$. The complex computing
this cohomology is denoted by $R\Gamma (Y,G)$ and it lives in the bounded derived category
$D({\rm point})$
whose objects are    complexes of vector spaces with cohomology given by finite dimensional rational vector spaces.

The filtrations we consider are finite and increasing. 
For every $G \in D(Y)$, the $\frak{t}$-structure defines a natural filtered object $(R\Gamma (Y,G), P)$, and $P$ is called the perverse filtration.

If a statement is valid for every value of  an index, e.g. the degree of a cohomology group, or the step of a filtration, then we denote such an index by a bullet-point symbol, or by a star symbol: $H^\bullet (X,F)$, $P_{\bullet}$, $\ptd{\bullet}$,  $P_\star H^\bullet (X,F)$, ${\rm Gr}^{P_f}_\star H^\bullet (X,F)$.

Given $G\in D(Y)$ we have the  system of truncation morphisms:
\beq\la{2wxs}
\xymatrix{
\ldots \ar[r] & \ptd{k-1} G \ar[r] & \ptd{k} G \ar[r] & \ldots \ar[r]& G
}
\eeq
A morphism $G\to G'$ in $D(Y)$ gives rise to a system of morphisms:
\beq\la{2w}
\xymatrix{
\ptd{\bullet} G \ar[r] & \ptd{\bullet} G'
}
\eeq
which are compatible with (\ref{2wxs}) is the evident manner. We say that  the system (\ref{2w}) is a  system
of  compatible  morphisms. It gives rise to a morphisms of filtered objects:
\beq\la{lmn}
\xymatrix{
\left( R\Gamma (Y, G), P) \right) \ar[r] &  \left( R\Gamma (Y, G'), P)\right) & {\rm in}\; DF({\rm pt}),
}
\eeq
where $DF(-)$ denotes the filtered derived category \ci{il}.

 Recall that $P(Y)$ is Artinian, so that the Jordan-Holder theorem holds in it.
The constituents of a non-zero perverse sheaf $G \in P(Y)$ are the isomorphisms classes of the
perverse sheaves appearing  in the unique and finite
collection of non-zero simple perverse sheaves appearing as  the quotients  in a Jordan-Holder filtration of $G.$
The  constituents of  a non-zero complex $G\in D(Y)$ are defined to be the constituents of all of its non-zero perverse
cohomology sheaves.

In general, we drop decorations (indices, parentheses, space variables, etc.) if it seems harmless in the context.

In the context of a Cartesian diagram of varieties, we denote parallel arrows by the same symbol. This should not lead to confusion in expressions like the base change morphism $g^*f_*  \to f_*g^*$.

We are going to make heavy use of the nearby/vanishing cycle functors. See \S\ref{vncf} for the basic facts.

We are going to use the decomposition theorem for semisimple complexes,  i.e. isomorphic to direct sums of shifted
simple perverse sheaves, due to Mochizuki, Sabbah and others; see the references in 
\ci{dess}.

\begin{tm}\la{dt}
Let $f:X\to Y$ be a proper morphism of varieties and let $F \in D(X)$ be semisimple. Then $f_* F$ is semisimple.
\end{tm}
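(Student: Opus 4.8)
The plan is to reduce, by formal manipulations, to the case of a projective morphism and a single simple perverse sheaf, where the statement follows from the Hodge-theoretic structure theory of polarizable pure twistor $\mathcal{D}$-modules; the decomposition theorem in that setting is the substance of the cited results of Sabbah and Mochizuki. For the reductions: since $f_*$ is additive and commutes with shifts, and a finite direct sum of semisimple complexes is semisimple, we may assume $F$ is a shifted simple perverse sheaf, and (the shift being harmless) $F = \mathrm{IC}_{\ov{Z}}(L)$ for an irreducible closed $\ov{Z} \subseteq X$, a smooth connected dense open $U \subseteq \ov{Z}$, and an irreducible $\mathbb{Q}$-local system $L$ on $U$. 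Pushing a simple perverse sheaf forward along the closed embedding $\ov{Z} \hookrightarrow X$ keeps it simple, so we may replace $X$ by $\ov{Z}$ and $Y$ by the closed image $f(\ov{Z})$, and assume $X = \ov{Z}$ is irreducible. Finally, since $\mathbb{Q}$ is perfect, $L \otimes_{\mathbb{Q}} \mathbb{C}$ is again semisimple, $F \otimes_{\mathbb{Q}} \mathbb{C}$ is a semisimple complex, and semisimplicity descends along the faithfully flat extension $\mathbb{Q} \subseteq \mathbb{C}$, so it suffices to work with $\mathbb{C}$-coefficients.

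The key input is nonabelian Hodge theory. By the theorems of Corlette and Simpson in the tame case and of Mochizuki in general, a semisimple $\mathbb{C}$-local system on a smooth quasiprojective variety carries a harmonic metric and hence underlies a polarizable pure twistor $\mathcal{D}$-module; taking minimal extensions, $\mathrm{IC}_{\ov{Z}}(L)$ is the perverse sheaf underlying a polarizable pure twistor $\mathcal{D}$-module (this uses quasiprojectivity of the ambient variety, which we arrange below). For a projective morphism, the Hard Lefschetz theorem and the decomposition theorem for direct images of polarizable pure twistor $\mathcal{D}$-modules (Sabbah; Mochizuki) then give that the pushforward splits, in the derived category, as the direct sum of its perverse cohomology $\mathcal{D}$-modules, each of which is again polarizable pure and hence a direct sum of simple objects. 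Passing to underlying perverse sheaves yields the theorem when $f$ is projective and $F$ is the $\mathrm{IC}$ of a semisimple local system.

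To pass from projective to proper I would run the standard dévissage via the relative Chow's lemma: there is a projective morphism $\nu : X' \to \ov{Z}$ with $X'$ irreducible quasiprojective, which is an isomorphism over a dense open $U$ (shrunk so that $L|_U$ is a local system and $\nu^{-1}(U) \xrightarrow{\sim} U$), and with $g := f \circ \nu : X' \to Y$ projective. Set $F' := \mathrm{IC}_{X'}(L)$. By the projective case applied to $\nu$, the complex $\nu_* F'$ is semisimple; since it agrees with $F$ over $U$, it contains $F = \mathrm{IC}_{\ov{Z}}(L)$ as a direct summand. Hence $f_* F$ is a direct summand of $f_* \nu_* F' = g_* F'$, which is semisimple by the projective case applied to $g$ (again $F'$ underlies a polarizable pure twistor $\mathcal{D}$-module). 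A direct summand of a semisimple complex is semisimple, so $f_* F$ is semisimple.

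The main obstacle, and essentially the whole content of the theorem, lies in the two imported deep facts: (i) the existence of a harmonic metric, equivalently a polarizable pure twistor structure, on an arbitrary semisimple local system on a quasiprojective variety, including the irregular/wild case (Corlette, Simpson, Mochizuki); and (ii) Hard Lefschetz, semisimplicity and the decomposition theorem for direct images of polarizable pure twistor $\mathcal{D}$-modules under projective morphisms (Sabbah, Mochizuki). Everything else above --- the reductions and the Chow's-lemma dévissage --- is formal.
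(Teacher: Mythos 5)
You should first be aware that the paper contains no proof of Theorem \ref{dt}: it is imported as a theorem of Mochizuki, Sabbah ``and others'', with the reduction from proper morphisms of arbitrary varieties to the known projective case carried out in the cited note \ci{dess}. So there is no internal argument to compare yours against; your proposal is in effect a reconstruction of that citation, and it follows the same route: reduction to a single simple intersection complex, passage from $\rat$- to $\comp$-coefficients (a step that is genuinely needed here, since $D(X)$ has rational coefficients, and which you handle correctly via base change of Ext groups), the harmonic-metric/pure twistor $\mathcal{D}$-module package as a black box for the projective case, and a Chow's-lemma d\'evissage together with the fact that a direct summand of a semisimple complex is semisimple. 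That last fact deserves at least a sentence: one needs the standard lemma that a direct summand of a direct sum of shifted semisimple perverse sheaves is again such a direct sum (via perverse cohomology and Hom-vanishing), not just semisimplicity of summands of semisimple perverse sheaves.

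The genuine gap is at the one point where the cited note does non-formal work. You yourself flag that the twistor-theoretic input requires quasi-projectivity of the ambient variety and promise to ``arrange it below'' by relative Chow's lemma; but Chow's lemma for the proper morphism $\ov{Z}\to Y$ only produces $\nu:X'\to\ov{Z}$ projective and birational with $X'\to Y$ projective, hence $X'$ quasi-projective as a variety only when $Y$ is --- and the theorem allows arbitrary $Y$ (nor need $X$, $\ov{Z}$ be quasi-projective). The same objection applies to invoking ``the projective case'' for $\nu:X'\to\ov{Z}$ and for $g:X'\to Y$ with possibly non-quasi-projective targets. One can partially repair this by using the absolute form of Chow's lemma (over the point) to make $X'$ itself quasi-projective, after which $g=f\circ\nu$ is automatically projective (a proper morphism from a quasi-projective variety is projective); but then you are applying the imported decomposition theorem to projective morphisms whose target is an arbitrary variety, which exceeds the ``projective morphisms of quasi-projective varieties'' form you set up, and otherwise you must reduce to a quasi-projective target, e.g.\ by covering $Y$ by affines --- which requires knowing that semisimplicity of $f_*F$ can be checked Zariski-locally on $Y$, not a formality --- or by a compactification argument. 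Pinning down exactly this is the content of \ci{dess}, and it is missing from your write-up. Note that in the application in this paper the target is the Hitchin base, which is quasi-projective over the curve $S$, so your argument does cover the case actually used; it just does not prove Theorem \ref{dt} in the stated generality. (Minor: ``wild/irregular'' is not relevant for local systems; the harmonic-metric input you need is the tame quasi-projective case of Corlette, Jost--Zuo and Mochizuki.)
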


$\,$
\subsection{The vanishing and nearby cycles formalism}\la{vncf}$\;$

Standard references  for this section are \ci[XIII, XIV]{sga72}  and \ci[Ch. 8,10]{kash}.

Let $S$ be a nonsingular and connected curve and consider a morphism of varieties:
\[v=v_Y: Y\to S.\]
Let $s \in S$ and let $i:s \to S$ be the closed embedding; this is what we call the special point. Objects restricted to $s$
maybe be denoted with a subscript $s$; e.g. $Y_s:= v^{-1}(s)$.

In what follows, in Choice \ref{sch}, we 
choose a disk $\disk \subseteq S,$ centered at $s.$ For the relevance of this choice to this paper, see Remark \ref{noh}.

Let $Y_\disk:= v_{Y}^{-1} (\disk)$ and let $v_{Y_\disk}:={v_Y}_{|Y_\disk}: Y_\disk \to \disk.$

We have the functors $i^*, i^!, \psi:=\psi_{Y_\disk}$ and  $\phi:=\phi_{Y_\disk}$:
\beq\la{fctz}
i^*,i^!, \psi, \phi:  D(Y) \to D(Y_s),
\eeq
where  $\psi$ is the nearby cycle functor and   $\phi$ is the vanishing cycle functor.

We have  the two, Verdier-dual, canonical distinguished triangle of functors: (we denote by the same symbol the dual arrows $\s$)
\beq\la{1p}
\xymatrix{
i^*[-1]  \ar[r]^-{\s} & \psi [-1]  \ar[r] & \phi   \ar@{~>}[r]   &, &
\phi \ar[r] & \psi[-1]  \ar[r]^-{\s} & i^![1]   \ar@{~>}[r] &.
}
\eeq

Recall that: 
\beq\la{tsh}
[\star] \circ \ptd{\bullet}= \ptd{\bullet -\star} \circ [\star], \qquad \mbox{ditto for $\ptu{\bullet}$ and $\ph^\bullet$}.
\eeq

\begin{fact}\la{psp0}

{\rm ({\bf 
$\frak{t}$-exactness for $\psi [-1]$ and $\phi$})} 
The functors $\psi [-1]$  and $\phi$ are exact functors of triangulated categories,  are $\frak t$-exact, and they commute with Verdier duality. In particular, they commute with the formation of the perverse cohomology sheaves functors $\ph^{\bullet}$ and with
the perverse truncation functors $\ptd{\bullet}$ and $\ptu{\bullet}$. We thus  have the following canonical identifications: 
\beq\la{can1}
\xymatrix{
\ptd{\bullet} \phi =  \phi \ptd{\bullet}; & \ptd{\bullet} \psi [-1] =  \psi [-1] \ptd{\bullet};&
 \mbox{ditto for $ \ptu{\bullet}$ and  $\ph^{\bullet}$}.
}
\eeq
\end{fact}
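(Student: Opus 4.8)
The plan is to take as a single input the one genuinely deep statement here — the $\frak{t}$-exactness of $\psi[-1]$ and its compatibility with Verdier duality, which is a theorem of Gabber — and to derive everything else formally. The exactness of $\psi$, $\psi[-1]$ and $\phi$ as functors of triangulated categories is built into the constructions of \S\ref{vncf}: after Choice \ref{sch} of a disk $\disk$, the functor $\psi$ is assembled from $*$-pull-backs, (derived) $*$-push-forwards and restrictions along the base change of $Y_\disk\setminus Y_s$ to the universal cover of $\disk\setminus\{s\}$, and $\phi$ is the functor fitting functorially into the canonical triangles (\ref{1p}); since $*$-pull-back, $*$-push-forward, $i^*$ and the mapping cone all send distinguished triangles to distinguished triangles, so do $\psi$, $\psi[-1]$ and $\phi$ (see \ci{sga72}, \ci{kash}).

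For the $\frak{t}$-exactness I would cite Gabber's theorem for $\psi[-1]$ (see \ci{sga72}, \ci{kash}). Its proof — the technical heart, and the step I expect to be the real obstacle in a self-contained treatment — proceeds by d\'evissage: using that $\psi$ is compatible with proper push-forward and with restriction to the strata of a stratification, together with resolution of singularities, one reduces the support estimate $\psi[-1]\left({^p\!D}^{\leq 0}(Y)\right)\subseteq{^p\!D}^{\leq 0}(Y_s)$ to the case of a constant sheaf on a smooth total space with reduced normal-crossing special fibre, where $\psi_f$ admits an explicit combinatorial description and the estimate is transparent; the cosupport estimate and the commutation with Verdier duality $\bb{D}$ then follow by duality. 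Granting this for $\psi[-1]$, the same two properties for $\phi$ are elementary consequences of (\ref{1p}). For $F$ perverse on $Y$: $\psi[-1]F$ is perverse by Gabber, while $i^*F\in{^p\!D}^{\leq 0}(Y_s)$ and $i^!F\in{^p\!D}^{\geq 0}(Y_s)$ by the recollement estimates for the closed embedding $Y_s\hookrightarrow Y$; feeding this into the long exact perverse-cohomology sequence of the first triangle in (\ref{1p}) shows $\ph^{k}(\phi F)=0$ for $k\geq 1$, and feeding it into that of the second triangle in (\ref{1p}) shows $\ph^{k}(\phi F)=0$ for $k\leq -1$, so $\phi F\in{^p\!D}^{[0,0]}(Y_s)$. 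Moreover, since the two triangles of (\ref{1p}) are Verdier-dual to one another (with $\bb{D}i^*\cong i^!\bb{D}$), applying $\bb{D}$ to the first triangle of (\ref{1p}) evaluated at $F$ returns the second triangle evaluated at $\bb{D}F$; comparing the two gives $\bb{D}\phi\cong\phi\bb{D}$.

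It then remains to deduce the identifications (\ref{can1}), which is purely formal: let $T$ be any exact, $\frak{t}$-exact functor of triangulated categories (here $T=\psi[-1]$ or $T=\phi$), and let $G\in D(Y)$. Applying $T$ to the canonical truncation triangle $\ptd{j}G\to G\to\ptu{j+1}G\rightsquigarrow$ produces a distinguished triangle $T\,\ptd{j}G\to TG\to T\,\ptu{j+1}G\rightsquigarrow$ in which $T\,\ptd{j}G\in{^p\!D}^{\leq j}(Y_s)$ and $T\,\ptu{j+1}G\in{^p\!D}^{\geq j+1}(Y_s)$ by $\frak{t}$-exactness; by the uniqueness, up to unique isomorphism, of the perverse truncation triangle of $TG$, the structural map gives a canonical isomorphism $T\,\ptd{j}G\cong\ptd{j}\,TG$, and one checks it is compatible with the transition morphisms of (\ref{2wxs}). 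Running the same argument with the other truncation triangle $\ptu{j}G\to G\to\ptd{j-1}G\rightsquigarrow$ gives $T\,\ptu{\bullet}\cong\ptu{\bullet}\,T$, and composing these — together with the shift identities (\ref{tsh}) — gives $T\,\ph^{\bullet}\cong\ph^{\bullet}\,T$. These are exactly the identifications in (\ref{can1}).
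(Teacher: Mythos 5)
Your proposal is correct and matches the paper's treatment: the paper states this as a Fact, referring to SGA 7 and Kashiwara--Schapira for the one genuinely deep input (Gabber's theorem that $\psi[-1]$ is $\frak{t}$-exact and commutes with $\bb{D}$), and everything else --- the perversity of $\phi F$ for $F$ perverse via the two triangles (\ref{1p}) together with the recollement estimates $i^*F\in{^p\!D}^{\leq 0}$, $i^!F\in{^p\!D}^{\geq 0}$, the duality statement for $\phi$ by dualizing one triangle into the other, and the identifications (\ref{can1}) via uniqueness of the truncation triangle of $TG$ --- is exactly the formal bookkeeping you carry out, and it checks out. The only point worth tightening is that your argument for $\phi$ establishes that it sends the heart to the heart; full $\frak{t}$-exactness then follows by the standard d\'evissage on perverse amplitude (induction on the number of nonzero perverse cohomology sheaves, using the truncation triangles and the exactness of $\phi$ as a triangulated functor), which you should record explicitly.
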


 The following is a key property of the vanishing cycle functor.
 
 \begin{fact}\la{psp}
{\rm ({\bf 
Vanishing of $\phi$ for smooth morphism and lisse  sheaves})}
If $v_{Y_\disk}: Y_\disk \to \disk$ is smooth and  $G \in D(Y)$ has locally constant cohomology sheaves    on $Y_\disk$,
then $\phi\,G=0 \in D(Y_s)$. See \ci[XIII, 2.1.5]{sga72}. Note the special case where $Y=S$ and $v_Y$ is the identity.
\end{fact}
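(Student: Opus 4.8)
The plan is to reduce the vanishing of $\phi\,G$ first to a statement about the specialization morphism and then to a local computation on $Y_\disk$ near a point of the central fibre. By the first distinguished triangle in (\ref{1p}), $\phi\,G$ is a shift of the cone of the canonical morphism $\sigma\colon i^{*}G\to\psi\,G$ in $D(Y_{s})$, so it is enough to prove that $\sigma$ is an isomorphism; and since we work in the constructible derived category on $Y_{s}$, this can be checked stalk by stalk.

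Fix $y\in Y_{s}$. The stalk $(i^{*}G)_{y}$ is $G_{y}$, and the step I would carry out next is to identify $(\psi\,G)_{y}$ with the cohomology of the Milnor fibre of $v_{Y_\disk}$ at $y$: for a small ball $B=B(y,\e)\subseteq Y_{\disk}$ and $0<|t-s|\ll\e$ there is a canonical isomorphism $(\psi\,G)_{y}\cong R\Gamma(F_{y},G|_{F_{y}})$, with $F_{y}:=v_{Y_{\disk}}^{-1}(t)\cap B$, under which $\sigma_{y}$ becomes the composite of the restriction $R\Gamma(B,G)\to R\Gamma(F_{y},G|_{F_{y}})$ with the canonical isomorphism $G_{y}\xrightarrow{\sim}R\Gamma(B,G)$. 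Now both hypotheses come in. Because $v_{Y_{\disk}}$ is a submersion at $y$ (smoothness, in the analytic topology), an analytic chart near $y$ turns it into a linear projection, so that $B$ may be taken to split as a product of a ball in the fibre with a small disk in $\disk$; then $F_{y}$ is homeomorphic to a ball, and the inclusion $F_{y}\hookrightarrow B$ is a homotopy equivalence of contractible spaces. Because $G$ has locally constant cohomology sheaves, $G|_{B}$ has constant cohomology sheaves and is therefore quasi-isomorphic to a direct sum of shifted constant sheaves on $B$, so $R\Gamma(-,G)$ carries the homotopy equivalence $F_{y}\hookrightarrow B$ to an isomorphism; tracing through the identifications, $\sigma_{y}$ is the identity of $G_{y}$. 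As $y\in Y_{s}$ was arbitrary, $\sigma$ is an isomorphism and $\phi\,G=0$. The noted special case $Y=S$, $v_{Y}=\mathrm{id}$, is this argument with $Y_{\disk}=\disk$, where it degenerates to the contractibility of the universal cover of the punctured disk $\disk\setminus\{s\}$; and no properness of $v_{Y_{\disk}}$ is needed, since everything happens in an arbitrarily small neighbourhood of a point of $Y_{s}$.

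I expect the one genuinely nontrivial ingredient to be the stalk formula $(\psi\,G)_{y}\cong R\Gamma(F_{y},G|_{F_{y}})$ together with its compatibility with $\sigma$; this is part of the standard foundational package for nearby cycles (see \ci[Ch. 8, 10]{kash} and \ci[XIII, XIV]{sga72}), so in practice I would simply invoke it, the remaining content of the statement being the elementary facts that the Milnor fibre of a smooth morphism is contractible and that restriction along $F_{y}\hookrightarrow B$ is a hypercohomology isomorphism for a complex with locally constant cohomology sheaves.
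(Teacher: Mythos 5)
Your argument is correct and is exactly the standard local computation underlying the reference the paper gives for this Fact (SGA 7, Exp. XIII, 2.1.5): the paper itself offers no proof beyond that citation, and your reduction to the stalkwise statement via the Milnor-fibre description of $(\psi\,G)_y$, plus smoothness giving a local product structure and local constancy giving homotopy invariance of $R\Gamma$, is the intended argument. The only step worth a word of justification is the splitting of $G|_B$ into shifted constant sheaves (it holds because higher Ext groups between constant sheaves on a contractible ball vanish), though you could avoid it entirely by comparing the hypercohomology spectral sequences of $B$ and $F_y$.
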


 \begin{fact}\la{gh1}
The composition $i^*[-1] \to \psi[-1] \to i^![1]$ yields a morphism of functors $D(Y) \to D(Y_s)$:
\beq\la{gh}
i^*[-1]\lorw  i^![1]
\eeq 

The  morphism (\ref{gh})  is an isomorphism when evaluated on a complex $G\in D(Y)$ such that
$\phi \, G=0;$ in this case, we have isomorphisms: 
\beq\la{ii}
\xymatrix{
i^*[-1] G \ar[r]^-\simeq &  \psi [-1] G  \ar[r]^-\simeq  & i^![1] G
}
\eeq

 The morphism {\rm (\ref{gh})}
coincides with the morphism obtained via Verdier's specialization functor, (cf. \ci{jorg}, for example), so that it depends  only on the closed embedding $Y_s\to Y,$
i.e. it is independent of $v_Y$ and of the choice of the disk $\disk.$ 
\end{fact}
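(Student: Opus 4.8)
The plan is to establish the three assertions of Fact \ref{gh1} in order, the first two being formal consequences of the triangles (\ref{1p}) and the third — the comparison with Verdier's specialization — carrying the real content. To construct the morphism (\ref{gh}): the two distinguished triangles in (\ref{1p}) are Verdier dual to one another, and the arrows labelled $\s$ are the canonical maps $i^*[-1]\to\psi[-1]$, with cofiber $\phi$, and $\psi[-1]\to i^![1]$, with fiber $\phi$; one simply sets (\ref{gh}) to be the composite $i^*[-1]\xrightarrow{\s}\psi[-1]\xrightarrow{\s}i^![1]$, which is a morphism of functors $D(Y)\to D(Y_s)$ since each $\s$ is one.

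For the isomorphism statement, suppose $\phi\,G=0$. In the first triangle of (\ref{1p}) evaluated on $G$, namely $i^*[-1]G\xrightarrow{\s}\psi[-1]G\to\phi\,G\rightsquigarrow$, the third term vanishes, so the first arrow is an isomorphism; in the second triangle $\phi\,G\to\psi[-1]G\xrightarrow{\s}i^![1]G\rightsquigarrow$ the first term vanishes, so the second arrow is an isomorphism. Composing yields (\ref{ii}), hence that (\ref{gh}) is an isomorphism on such $G$. By Fact \ref{psp} this holds, in particular, whenever $v_{Y_\disk}$ is smooth and $G$ has locally constant cohomology sheaves on $Y_\disk$; and by Fact \ref{psp0} these isomorphisms commute with $\ptd{\bullet}$, $\ptu{\bullet}$ and $\ph^\bullet$ — the form in which they will be used later.

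The substantive point is the identification of (\ref{gh}) with the morphism furnished by Verdier's specialization functor, from which the independence of $v_Y$ and of the choice of $\disk$ follows. From the closed embedding $\iota:Y_s\hookrightarrow Y$ alone one forms the deformation to the normal cone: a variety $\widetilde{Y}$ with a map to $\mathbb{A}^1$ whose general fiber is $Y$ and whose fiber over $0$ is the normal cone $C:=C_{Y_s}Y$, carrying its dilation $\Gm$-action; writing $p:\widetilde{Y}|_{\Gm}=Y\times\Gm\to Y$ for the projection, Verdier's specialization is $\mathrm{Sp}(G):=\psi_{\widetilde{Y}/\mathbb{A}^1}(p^*G)$, a $\Gm$-monodromic complex on $C$, which carries its own canonical triangles of type (\ref{1p}) relative to the zero-section $z:Y_s\hookrightarrow C$ and the projection $q:C\to Y_s$. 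The plan is then: (a) invoke the classical comparison of the nearby cycles of $v_Y$ along $Y_s$ with $\mathrm{Sp}(\cdot)$, compatibly with the canonical triangles (\ref{1p}); (b) transport, under this comparison, the composite $i^*[-1]\to\psi[-1]\to i^![1]$ to the analogous composite built from $z$ and $q$ out of the pair $(C,Y_s)$; (c) observe that this last composite, and hence (\ref{gh}), refers only to $\iota:Y_s\hookrightarrow Y$, not to $v_Y$ or $\disk$. For the precise form of the comparison in (a) — and in particular the matching of all shifts, and, over $\mathbb{Q}$, the harmless Tate twists — I would cite \cite{jorg}; this is classical, going back to Verdier. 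I expect steps (a)--(b) to be the main obstacle: making the identification $\psi_{v_Y}\simeq\mathrm{Sp}$ and the comparison of the two families of canonical triangles genuinely functorial and normalization-consistent. Everything else in the argument is formal.
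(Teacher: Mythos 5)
Your proposal is correct and takes essentially the same route as the paper: the paper records this statement as a Fact, with the construction of (\ref{gh}) and the isomorphism under $\phi\,G=0$ being exactly the formal consequences of the triangles (\ref{1p}) that you give, and the comparison with Verdier's specialization functor delegated to \cite{jorg}, just as in your step (a). Your extra sketch of the deformation to the normal cone is a reasonable expansion of that citation but adds nothing the paper requires.
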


\begin{fact}\la{psphit}
({\bf 
Base change diagrams for $\psi$ and $\phi$})
Let $f:X\to Y$ be an $S$-morphism and let $v_X= v_Y \circ f:X\to S$.
The  base change morphisms associated with  $i$ and $f$ give rise to 
morphisms of distinguished triangles of morphisms as follows (cf. \ci[XIII, 2.1.7]{sga72}):

\ben

\item\la{pp3b}
\beq\la{fdtbis}
\xymatrix{
i^* [-1] f_*   \ar[r]^{\s\, \circ f_*} \ar[d] &   \psi_Y [-1]  f_*   \ar[r] \ar[d]  &   \phi_Y f_* \ar[d]  \ar@{~>}[r] & \\
f_* i^* [-1]  \ar[r]^{f_* \circ \, \s}   &  f_* \psi_X  [-1]  \ar[r]   & f_* \phi_X   \ar@{~>}[r]  &.
}
\eeq
When $f$ is proper, 
the  morphism (\ref{fdtbis}) is an  isomorphism.

\item\la{p!}

\beq\la{fd1}
\xymatrix{
 \phi_Y f_*    \ar[r]  \ar[d]  &   \psi_Y [-1] f_*    \ar[r]^-{\s \circ f_*}  \ar[d]  &  i^! [1] f_*   \ar[d]^-=    \ar@{~>}[r] &
\\
  f_* \phi_X \ar[r]      &  f_* \psi_Y[-1]     \ar[r]^-{f_* \circ \s}      & f_*i^![1]  \ar@{~>}[r]  &,
}
\eeq
When $f$ is proper, 
the  morphism (\ref{fd1}) is an  isomorphism.

\item
by combining  the  (\ref{fdtbis}) with   (\ref{fd1}),  we obtain the following commutative diagram:
\beq\la{r00}
\xymatrix{
i^*[-1] f_* \ar[d]    \ar[r]   & \psi[-1] f_* \ar[r]    \ar[d]    & i^![1] f_* \ar[d] 
\\
f_* i^*[-1]      \ar[r]   & f_* \psi[-1]  \ar[r]        &      f_* i^![1].
}
\eeq
\een

\end{fact}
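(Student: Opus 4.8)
The plan is to deduce all three parts from two ingredients: purely formal compatibilities among base change morphisms, which produce the morphisms of distinguished triangles (\ref{fdtbis}) and (\ref{fd1}) and the commutativity of (\ref{r00}); and proper base change, which forces all the vertical arrows to be isomorphisms once $f$ is proper. Throughout I would use the description of $\psi$ via the universal cover: writing $k_Z:\widetilde{Z}^{*}\to Z_\disk$ for the canonical map obtained by base changing $Z_\disk\to\disk$ along $\widetilde{\disk}^{*}\to\disk^{*}\hookrightarrow\disk$, one has $\psi_{Z_\disk}=i^{*}(k_Z)_{*}(k_Z)^{*}$, while $\phi$ is defined by the triangle (\ref{1p}); see \ci[XIII]{sga72}.

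\emph{Construction of the vertical maps and of the morphisms of triangles.} For the $S$-morphism $f:X\to Y$ I would first record that the two relevant squares --- $\{X_s\to X_\disk,\ Y_s\to Y_\disk\}$ with $f$, and $\{\widetilde{X}^{*}\to X_\disk,\ \widetilde{Y}^{*}\to Y_\disk\}$ with $f$ and its base change $\widetilde{f}$ --- are Cartesian. The first-column vertical map of (\ref{fdtbis}) is then the base change morphism $i^{*}f_{*}\to f_{*}i^{*}$; the second-column one is the composite
\[ i^{*}(k_Y)_{*}(k_Y)^{*}f_{*}\to i^{*}(k_Y)_{*}\widetilde{f}_{*}(k_X)^{*}=i^{*}f_{*}(k_X)_{*}(k_X)^{*}\to f_{*}i^{*}(k_X)_{*}(k_X)^{*}=f_{*}\psi_X, \]
assembled from base change along the two squares together with the identity $k_Y\circ\widetilde{f}=f\circ k_X$; and the third-column one, for $\phi$, is the map forced by the first two once the left square of (\ref{fdtbis}) is checked to commute. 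That commutativity should be a routine diagram chase, as the $\psi$-arrow is manufactured from the $i^{*}$-arrow by functoriality. This makes (\ref{fdtbis}) a morphism of triangles; (\ref{fd1}) is obtained identically from the Verdier-dual triangle on the right of (\ref{1p}), the only new vertical map being the base change morphism $i^{!}f_{*}\to f_{*}i^{!}$ for the pair $(i^{!},f_{*})$. Since (\ref{fdtbis}) and (\ref{fd1}) share the middle vertical arrow (the one for $\psi[-1]$), stacking the left square of the first onto the right square of the second yields (\ref{r00}), whose horizontal composites are the morphism (\ref{gh}) for $Y$ evaluated on $f_{*}(-)$, respectively $f_{*}$ applied to the morphism (\ref{gh}) for $X$.

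\emph{Isomorphy when $f$ is proper.} Proper base change gives $i^{*}f_{*}\xrightarrow{\sim}f_{*}i^{*}$, and Verdier duality (using $f_{*}=f_{!}$ and $i^{!}\mathbb{D}\simeq\mathbb{D}i^{*}$) upgrades this to $i^{!}f_{*}\xrightarrow{\sim}f_{*}i^{!}$. For the $\psi[-1]$-arrow, each of the three maps in the composite above is a base change morphism along a Cartesian square whose ``proper leg'' is $f$ or its base change $\widetilde{f}$, both proper, and is therefore an isomorphism; the $\phi$-arrow is then an isomorphism by the elementary fact that in a morphism of distinguished triangles two of whose vertical arrows are isomorphisms the third is too, applied to (\ref{fdtbis}) or (\ref{fd1}).

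I expect the only genuinely non-formal point to be the commutation of $\psi$, and hence of $\phi$, with proper pushforward: this is where the non-finite-type cover $\widetilde{\disk}^{*}\to\disk$ enters, so one must check that the appropriate form of proper base change still applies there --- which is precisely what is done in \ci[XIII]{sga72}. Everything else is bookkeeping with base change morphisms, the five lemma for triangulated categories, and the compatibility of $\psi[-1]$ and $\phi$ with Verdier duality recorded in Fact \ref{psp0}, which is what lets one pass between the two triangles of (\ref{1p}).
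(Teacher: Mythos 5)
Your outline matches the paper's treatment: the paper gives no proof of this Fact, quoting it from \ci[XIII, 2.1.7]{sga72}, and your construction (vertical arrows assembled from the base change maps for $i^*$, for the cover $k_{-}$ entering the definition of $\psi$, and, via duality, for the pair $(i^!,f_*)$; isomorphy for proper $f$ by proper base change plus the two-out-of-three property for morphisms of distinguished triangles) is exactly the standard argument of that reference. Two small touch-ups: the arrow on $\phi$ is not literally ``forced'' by the other two—a fill-in exists by the triangulated axioms but is not formally unique, and the canonical map comes from the direct construction of $\phi$ in \ci{sga72}—and the $(i^!,f_*)$ base change map is an isomorphism for arbitrary $f$ (by duality from the compatibility of $f_!$ with arbitrary pullback), which is why that vertical arrow is drawn as an equality in (\ref{fd1}).
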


Let $t \in S$ be another point and, by abuse of notation, denote the closed embedding $t\to S$ also by $t.$ 
There is the natural morphism (cf. (\ref{gh})):
\beq\la{ght}
t^*[-1]\lorw  t^![1].
\eeq
\begin{fact}\la{gogo}
Let $G\in D(Y)$ and let $t \in S$ be a  general point. Then the natural morphism $t^* [-1] G\to t^![1]G$ is an isomorphism
in $D(Y_t).$ 
To see this,   recall {\rm (\ref{gh})}, and use the vanishing of   vanishing cycle functor ``translated to $t$"  \ci[Rmk. 4.2.4]{jorg}. As usual, here ``general" means that it can be chosen to be any point of a suitable Zariski-dense 
open subset $S^o\subseteq S$ that depends on $G \in D(Y).$ 
\end{fact}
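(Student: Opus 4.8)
The plan is to reduce the statement, via Fact \ref{gh1} applied with the point $t$ playing the role of the special point $s$, to the vanishing of the vanishing cycle functor ``centered at $t$'' on $G$. Indeed, the morphism (\ref{ght}) is, by construction, the composite $t^*[-1]G \to \psi_{Y_{\disk_t}}[-1]G \to t^![1]G$ for a sufficiently small disk $\disk_t\subseteq S$ centered at $t$, and by Fact \ref{gh1} it is an isomorphism in $D(Y_t)$ as soon as $\phi_{Y_{\disk_t}}G = 0$. So everything comes down to producing a Zariski-dense open $S^o\subseteq S$, depending on $G$, such that $\phi_{Y_{\disk_t}}G=0$ whenever $t\in S^o$ and $\disk_t$ is small enough.

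To this end, I would fix an algebraic Whitney stratification $\{Y_\alpha\}$ of $Y$ adapted to $G$, i.e.\ one for which every $\cs{j}{G}$ is locally constant on each stratum, and, refining it if necessary, arrange that $v_Y$ is a stratified map: each $v_Y(Y_\alpha)$ is smooth and locally closed in the curve $S$, and $v_Y|_{Y_\alpha}\colon Y_\alpha\to v_Y(Y_\alpha)$ is smooth. Since there are only finitely many strata, generic smoothness in characteristic zero yields a Zariski-dense open $S^o\subseteq S$ such that, for every $t\in S^o$ and every $\alpha$, either $Y_\alpha\cap Y_t=\emptyset$ or $t$ is a regular value of $v_Y|_{Y_\alpha}$, i.e.\ $v_Y|_{Y_\alpha}$ is submersive along $Y_\alpha\cap Y_t$ onto a neighborhood of $t$. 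Fixing $t\in S^o$ and $\disk_t$ small, the Thom--Mather first isotopy lemma (equivalently, Verdier's generic local triviality theorem for complex algebraic maps) then presents $v_{Y_{\disk_t}}\colon Y_{\disk_t}\to\disk_t$ as a stratum-preserving, topologically locally trivial fibration; trivializing over the contractible $\disk_t$ identifies $Y_{\disk_t}$ with $Y_t\times\disk_t$ compatibly with the strata and carries $G|_{Y_{\disk_t}}$ to the pullback of $G_t$ along the projection to $Y_t$. Since the local coordinate on $\disk_t$ factors through this smooth projection, compatibility of $\phi$ with smooth pullback (or a K\"unneth computation), together with the vanishing $\phi\,\comp_{\disk_t}=0$, which is the special case $Y=S$, $v_Y=\mathrm{id}$ of Fact \ref{psp}, gives $\phi_{Y_{\disk_t}}G=0$. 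This is precisely the content recorded in \ci[Rmk. 4.2.4]{jorg}, which I would invoke for this step; combined with Fact \ref{gh1} it finishes the proof, with $S^o$ the desired open.

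The genuinely substantive point is the last implication: passing from ``$t$ is a regular value of $v_Y$ on every stratum'' to ``the vanishing cycles of $G$ centered at $t$ vanish''. The infinitesimal, stratum-by-stratum submersivity does not by itself see the constructible complex $G$; upgrading it requires the Thom--Mather isotopy lemma to produce an honest local trivialization of the pair $(Y_{\disk_t},\,G|_{Y_{\disk_t}})$ over $\disk_t$, which is where the argument has content. Everything else — the existence of an adapted stratification, generic smoothness, and the reduction through Fact \ref{gh1} — is routine by comparison.
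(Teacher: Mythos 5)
Your proposal is correct and follows essentially the same route as the paper: reduce, via Fact \ref{gh1} applied with $t$ as the special point, to the vanishing of the vanishing cycles of $G$ for a small disk centered at a general $t$, and obtain that vanishing from the generic stratified-submersivity of $v_Y$ over a Zariski-dense open $S^o$, which is exactly the content of the cited \ci[Rmk. 4.2.4]{jorg} invoked in Facts \ref{gogo} and \ref{gi9}. The only difference is that you unwind that citation by hand (Whitney stratification adapted to $G$, generic smoothness, Thom--Mather local triviality, and the standard fact that a complex constructible for a product stratification over a contractible disk is pulled back from the fiber), whereas the paper takes it as a black box.
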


The following follows from the fact that for $t$ general, $t^*[-1], t^![1]$ commute with all the functors involved in the
constructions of perverse truncation.  
\begin{fact}\la{vbfz} 
{\rm ({\bf $\frak{t}$-exactness and $t^*, t^!$})}
Let $G \in D(Y)$ and let $t\in S$ be general. Then
we have:
\beq\la{vbgfz}
t^*[-1] \ptd{\bullet} G = \ptd{\bullet } t^*[-1] G, \quad t^![1] \ptd{\bullet} G = \ptd{\bullet } t^![1] G,
\quad \mbox{ ditto for  $\ptu{\bullet}$ and  $\ph^{\bullet}$.}
\eeq

\end{fact}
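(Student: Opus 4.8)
The plan is to reduce (\ref{vbgfz}) to the $\frak t$-exactness of the nearby cycle functor recorded in Fact \ref{psp0}, the point being that at a general point $t$ both $t^*[-1]$ and $t^![1]$ may be replaced by the nearby cycle functor at $t$. Concretely, I would fix a disk $\disk_t\subseteq S$ centered at a general point $t$ and write $\psi_t[-1],\phi_t:D(Y)\to D(Y_t)$ for the nearby and vanishing cycle functors of $v_Y$ relative to $\disk_t$. For $t$ general we have $\phi_t\,G=0$; this is precisely the vanishing used in the proof of Fact \ref{gogo}. Then, running the arguments of Fact \ref{gh1} at $t$ in place of $s$ --- the triangles (\ref{1p}) and the isomorphisms (\ref{ii}) are valid verbatim at any point --- the morphism (\ref{ght}), which by construction factors as $t^*[-1]\to\psi_t[-1]\to t^![1]$, yields canonical isomorphisms $t^*[-1]G\cong\psi_t[-1]G\cong t^![1]G$ that are natural in $G$.

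I would first observe that this single genericity hypothesis on $t$ settles all of (\ref{vbgfz}) simultaneously. By the commutations (\ref{can1}) of Fact \ref{psp0}, $\phi_t$ commutes with $\ptd{\bullet}$, $\ptu{\bullet}$ and $\ph^\bullet$, so $\phi_t\,G=0$ automatically forces $\phi_t\,\ptd{\bullet}G=\phi_t\,\ptu{\bullet}G=\phi_t\,\ph^\bullet G=0$; hence the isomorphisms above apply verbatim with $G$ replaced by any of $\ptd{k}G$, $\ptu{k}G$, $\ph^k G$. (Alternatively, since the perverse filtration is finite these are finitely many complexes and one could instead intersect the associated dense open loci of generic points; this is not needed.)

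The conclusion is then a diagram chase. Applying the naturality in $G$ of Fact \ref{gh1} to the truncation morphism $\ptd{k}G\to G$ --- whose source and target are both annihilated by $\phi_t$ --- produces a commutative square whose horizontal arrows are the isomorphisms $t^*[-1](-)\cong\psi_t[-1](-)$; combining this with the fact, also part of Fact \ref{psp0}, that the exact triangulated functor $\psi_t[-1]$ commutes with $\ptd{\bullet}$ compatibly with the truncation morphisms (\ref{2wxs}), one reads off
\[ t^*[-1]\ptd{k}G \;\cong\; \psi_t[-1]\ptd{k}G \;=\; \ptd{k}\psi_t[-1]G \;\cong\; \ptd{k}t^*[-1]G, \]
compatibly with the structure morphisms to $t^*[-1]G$, and symmetrically for $t^![1]$; the cases of $\ptu{\bullet}$ and $\ph^\bullet$ are entirely analogous. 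I do not expect a genuine obstacle here: all of the mathematical content is already packaged into Facts \ref{psp0} and \ref{gogo}, and the only point requiring care is the bookkeeping that shows the displayed identifications are the natural ones and are compatible with the truncation morphisms, which follows from the naturality in $G$ of the triangles (\ref{1p}) and of (\ref{ii}).
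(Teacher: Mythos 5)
Your argument is correct, but it is not the route the paper takes. The paper disposes of Fact \ref{vbfz} in one line: for $t$ general in the sense of Fact \ref{gi9} (strata of a stratification adapted to $G$ map smoothly to $S$ over $S^o$, no constituents supported in fibers over $S^o$), the shifted restrictions $t^*[-1]$ and $t^![1]$ commute with \emph{all} the functors entering the gluing construction of the perverse $\frak t$-structure, hence with $\ptd{\bullet}$, $\ptu{\bullet}$ and $\ph^\bullet$ directly; no auxiliary disk at $t$ and no nearby/vanishing cycles are invoked at this point. You instead translate the vanishing-cycle formalism to a disk centered at $t$, use $\phi_t G=0$ (which is indeed exactly the input the paper itself uses for Fact \ref{gogo}, via Sch\"urmann's Rmk.~4.2.4, so your genericity hypothesis is the same as the paper's), identify $t^*[-1]\cong\psi_t[-1]\cong t^![1]$ on the relevant complexes via the triangles (\ref{1p}), and then conclude from the $\frak t$-exactness of $\psi_t[-1]$ recorded in (\ref{can1}). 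Your observation that $\phi_t G=0$ already forces $\phi_t\ptd{\bullet}G=\phi_t\ptu{\bullet}G=\phi_t\ph^\bullet G=0$, so that a single genericity condition on $G$ suffices, is a nice touch and keeps the bookkeeping clean. What each approach buys: yours is essentially self-contained given Facts \ref{psp0} and \ref{gogo} and avoids re-examining the construction of the perverse $\frak t$-structure (in effect it is the same mechanism as Corollary \ref{dodo}, run at the general point $t$); the paper's is shorter and more direct, treats $t^*[-1]$ and $t^![1]$ on an equal footing without passing through $\psi_t$, and makes transparent that only the transversality of $Y_t$ to the stratification is at stake. The only point you gloss over is that the isomorphism you build should agree with (or at least be compatible with) the canonical comparison morphism of type $\delta$ in Lemma \ref{trmeglio}; since all your identifications come from the natural triangles (\ref{1p}) and the canonical identifications (\ref{can1}), this is routine, and the Fact as stated does not demand more.
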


\begin{fact}\la{gi9}
What follows is a consequence of Deligne's generic base change theorem \ci[Thm. 9.1]{sga4.5} 
  and of stratification theory; see also the discussion in \ci{desomm}.
Given a finite collection of morphisms $v_i: Y_i\to S$ and complexes $G_i \in D(Y_i),$ there is a Zariski-dense open subset
$S^o \subseteq S$ such that the direct images ${v_i}_* G_i$ are lisse, and such that their formation commutes with arbitrary base change.
By shrinking $S^o$ if necessary, we can further assume that 
the $G_i$ have no constituent supported on fibers of the morphisms $v_i$ over $S^o$, and that the strata on the $Y_i$, of stratifications
with respect to which the  $G_i$ are constructible, map smoothly to $S$ over $S^o$ (cf. \ci[Rmk. 4.2.4]{jorg}).
The points $t\in S^o$ are said to be general for  the finite collection $\{G_i\}.$ 
\end{fact}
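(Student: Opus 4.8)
The plan is to reduce to the case of a single morphism and then assemble the statement from Deligne's generic base change theorem together with generic smoothness (and, behind the scenes, Thom--Mather stratification theory). First I would note that a finite intersection of Zariski-dense open subsets of the connected nonsingular curve $S$ is again Zariski-dense and open, and that each of the asserted conditions persists after further shrinking $S^o$; so it is enough to produce, for each index $i$ separately, a dense open of $S$ with the required properties, and then intersect them. Fixing one $v\colon Y\to S$ and one $G\in D(Y)$, I would choose a Whitney stratification $\{Y_\alpha\}$ of $Y$, with each stratum a smooth irreducible locally closed subvariety, with respect to which $G$ is constructible.

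Next I would analyze the strata. For each $\alpha$, $v|_{Y_\alpha}\colon Y_\alpha\to S$ is of finite type with irreducible source, so its image in the curve $S$ is either a finite set of closed points or a cofinite subset; in the dominant case, generic smoothness over $\comp$ provides a dense open of $S$ over which $v|_{Y_\alpha}$ is smooth and surjective, and in the non-dominant case the stratum becomes empty after those finitely many points are removed. Deleting from $S$ all the bad points arising from all the strata gives a dense open $S_1$ over which every stratum either is empty or maps smoothly and surjectively onto $S_1$. Applying Deligne's generic base change theorem \ci[Thm. 9.1]{sga4.5} (see also \ci{desomm}) to $v$ and $G$, and shrinking further to a dense open $S_2\subseteq S_1$, the complex $v_*G$ has locally constant cohomology sheaves on $S_2$ and its formation commutes with arbitrary base change landing in $S_2$ — the mechanism being that, by Thom's first isotopy lemma, the stratified morphism $v^{-1}(S_2)\to S_2$ is topologically locally trivial, so that the fibrewise pairs $(Y_t,G|_{Y_t})$ do not vary with $t\in S_2$.

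Finally I would arrange the condition on constituents. The complex $G$ has only finitely many constituents, each an intersection complex $IC_{\overline{Z}}(L)$ of a simple local system $L$ on an irreducible locally closed smooth $Z\subseteq Y$, and such a constituent is supported on a fibre of $v$ precisely when $v(\overline{Z})$ is a single point of $S$; removing those finitely many points from $S_2$ produces the desired $S^o$, over which no constituent of $G$ is supported on a fibre while the chosen strata still map smoothly to $S$ (cf. \ci[Rmk. 4.2.4]{jorg}). Intersecting over the finite index set finishes the argument. I expect the only genuinely non-routine step to be the passage from ``the strata map smoothly to $S^o$'' to ``$v_*G$ is lisse and compatible with arbitrary base change on $S^o$'': this is exactly where Deligne's generic base change theorem has to be combined with stratification theory (Thom's isotopy lemma) rather than used in isolation, the remaining steps being bookkeeping with dense open subsets of a curve.
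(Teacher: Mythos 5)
Your proposal is correct and takes essentially the approach the paper intends: Fact \ref{gi9} is recorded there without proof, as a consequence of Deligne's generic base change theorem \ci[Thm. 9.1]{sga4.5} and stratification theory (see \ci{desomm} and \ci[Rmk. 4.2.4]{jorg}), and your reduction to a single morphism, generic smoothness of the strata over a cofinite subset of the curve, and removal of the finitely many points hit by the images of supports of fiber-supported constituents is exactly the standard fleshing-out of those citations. One caution: Thom's first isotopy lemma requires properness, so your assertion that the stratified morphism $v^{-1}(S_2)\to S_2$ is topologically locally trivial is not justified by submersivity of the strata alone (a non-proper counterexample is the projection of $\comp^2\setminus\{0\}$ to a coordinate line) and should instead be obtained from a relative compactification whose boundary is a union of strata, i.e. Verdier's generic fibration theorem; since the conclusions you actually use (lisseness, which over a curve is automatic generically, and compatibility with base change) are supplied by the generic base change theorem itself, this only affects the heuristic mechanism you offer, not the validity of the argument.
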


\begin{defi}\la{tge}
We say that $t \in S$ is general (for the collection of $G_i$'s) if $t \in S^o= S^o(G)$ (cf. Fact {\rm \ref{gi9}}).
\end{defi}

\begin{choice}\la{sch}
Let $s\in S$.  Let $S^o\subseteq S$ be the Zariski-dense open subset of points
of the connected curve $S$
which are general with respect to  some finite collection of $G_i$'s as above.
 Let $t \in S^o.$  Choose a disk $s,t \in \disk \subseteq S$ such that $\disk^*:= \disk \setminus \{s\} \subseteq S^o.$   Chose a pointed universal covering 
$(\w{\disk^*}, \w{t}) \to (\disk^*,t).$
\end{choice}

\begin{rmk}\la{idspt}
In the special case where $v_Y:Y\to S$ is the identity on $S,$ we have that $i^*,i^!, \psi ,\phi: D(S) \to D(s)$ and that $t^*, t^!: D(S)\to D(t)$.
We have canonical identifications $D(s)=D({\rm pt})= D(t),$ where  ${\rm pt}$ is just a point, so that all three categories are 
naturally equivalent to the bounded derived category of finite dimensional  rational vector spaces. Similarly, in the filtered case:
$DF(s)=DF({\rm pt})= DF(t)$.
\end{rmk}

When $v_Y:Y \to S$ is the identity, we have the following:

\begin{fact}\la{jo} {\rm{\bf (Fundamental isomorphism)}}
Let things be as in Choice {\rm \ref{sch}}. There are the natural isomorphisms:
\beq\la{000}
\xymatrix{
t^*[-1] G \ar[r]^-\simeq & \psi_\disk [-1]  G  \ar[r]^-\simeq & t^![1] G,  &{\rm in}\; D(t)=D({\rm pt}) = D(s),
}
\eeq
where all three terms are well-defined, up to canonical isomorphism, independently of the choices, but where the isomorphisms,
which depend on the choice of $\w{t},$ are uniquely defined modulo the monodromy action of the fundamental group
$\pi_1 ({\disk}^*,t).$  
See the fundamental identity \ci[XIV, 1.3.3.1]{sga72}. Note that {\rm (\ref{000})} is Verdier self-dual.
\end{fact}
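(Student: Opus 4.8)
The plan is to reduce the statement to Deligne's explicit description of nearby cycles via the universal cover of the punctured disk, together with an elementary computation on contractible spaces; this is essentially the fundamental identity \ci[XIV, 1.3.3.1]{sga72}. First I would observe that the three terms are intrinsic in the required sense: $t^*[-1]G$ and $t^![1]G$ visibly depend only on $G$ and the point $t$, while $\psi_\disk[-1]G$ is independent, up to canonical isomorphism, of the disk $\disk$ of Choice \ref{sch}, since the nearby cycle functor commutes, up to canonical isomorphism, with restriction to a smaller disk centered at $s$. Since $\disk^*\subseteq S^o$ by Choice \ref{sch}, the restriction $G_{|\disk^*}$ has locally constant cohomology sheaves, so, writing $p\colon\w{\disk^*}\to\disk^*$ for the universal cover, the pull-back $p^*(G_{|\disk^*})$ has constant cohomology sheaves on the contractible surface $\w{\disk^*}$. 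On a contractible space, pull-back from a point is an equivalence onto the subcategory of complexes with locally constant cohomology, so the natural map $\un{R\Gamma(\w{\disk^*},\,p^*G)}\to p^*G$ is an isomorphism; composing it with the evaluation $R\Gamma(\w{\disk^*},\,p^*G)\xrightarrow{\ \simeq\ }(p^*G)_{\w t}=G_t$ at the chosen lift $\w t$ trivializes $p^*G$ as the constant complex $\un{G_t}$. This trivialization is canonical once $\w t$ is fixed, and replacing $\w t$ by another lift of $t$ amounts to acting by a deck transformation of $\w{\disk^*}/\disk^*$, i.e.\ by an element of $\pi_1(\disk^*,t)$ acting on $G_t$ through the local monodromy --- which is the source of the stated ambiguity.

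Now let $\w p\colon\w{\disk^*}\to\disk$ denote the composite of $p$ with $\disk^*\hookrightarrow\disk$ and let $i\colon\{s\}\hookrightarrow\disk$ be the inclusion, so that $\psi_\disk G\cong i^*\w p_*\w p^*G$ by Deligne's construction (\ci[XIV]{sga72}). To compute the stalk at $s$ I would use that for every small disk $U\ni s$ the preimage $\w p^{-1}(U\setminus\{s\})$ is the universal cover of the punctured disk $U\setminus\{s\}$, hence contractible, so that $R\Gamma(\w p^{-1}(U\setminus\{s\}),\,\un{G_t})=G_t$; passing to the colimit over such $U$ gives $\psi_\disk G\cong G_t$, and, accounting for the shift, the first isomorphism $t^*[-1]G\xrightarrow{\ \simeq\ }\psi_\disk[-1]G$ of (\ref{000}), with the dependence on $\w t$ just described. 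The second isomorphism $\psi_\disk[-1]G\xrightarrow{\ \simeq\ }t^![1]G$ I would deduce by applying the Verdier duality functor $D$ to the first isomorphism for $DG$ in place of $G$, using that $\psi[-1]$ commutes with $D$ (Fact \ref{psp0}) and that $t^!=D\,t^*\,D$; a short formal check then shows that $D$ carries the composite $t^*[-1]G\to\psi_\disk[-1]G\to t^![1]G$ to the corresponding composite for $DG$, which is the self-duality of (\ref{000}).

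The step I expect to be the main obstacle is making the identification of $p^*(G_{|\disk^*})$ with a constant complex precise and tracking its dependence on $\w t$: one needs that on a contractible space the constructible derived category of complexes with locally constant cohomology is equivalent, via pull-back from a point, to the bounded derived category of finite-dimensional rational vector spaces, so that the a priori nonabelian ambiguity in trivializing $p^*G$ is exactly the $\pi_1(\disk^*,t)$-action on the stalk. Granting this, the locality of $\psi_\disk$, and the contractibility of the relevant covers of punctured subdisks, the stalk computation and the Verdier duality argument are routine.
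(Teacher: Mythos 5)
Your proposal is correct and matches the paper's treatment: the paper gives no argument for this Fact beyond citing the fundamental identity of \ci[XIV, 1.3.3.1]{sga72}, and your write-up is a faithful unpacking of exactly that identity in the relevant special case (identity map of the disk, $G$ lisse on $\disk^*$), via the contractibility of the universal cover of the punctured disk, the trivialization at the chosen lift $\w{t}$ with its $\pi_1(\disk^*,t)$-monodromy ambiguity, and Verdier duality for the $t^![1]$ half. No gaps.
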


\begin{fact}\la{spmz}
{\rm ({\bf Specialization morphism})}
Consider,  in the special case where   $f$ is $v_Y$,  the morphism of distinguished triangles {\rm (\ref{fdtbis})}.  
We place ourselves in the set-up of Choice {\rm \ref{sch}}.  
One would like to specialize cohomology from the special  point  $s\in \disk,$ to the general point $t \in \disk^*.$ This is not always possible, as we now discuss. 

 Diagram {\rm (\ref{fdtbis})} yields the
functorial morphism of distinguished triangles in $D({\rm pt})$:
\beq\la{jsa}
\xymatrix{
R\Gamma (s, i^*v_*G)    \ar[d] \ar[r]^-{\s_{Y_\disk}} &  R\Gamma (s, \psi_\disk v_*G) =  R\Gamma (Y_t, t^*G)  \ar[d] \ar[r]  &  R\Gamma (s, \phi_\disk [1] v_* G) 
\ar[d]   \ar@{~>}[r]  & 
\\
R\Gamma (Y_s, i^* G)   \ar[r]^-{\s_\disk} \ar@/0pt/@{-->}[ur]^-{\rm sp?}
&   R\Gamma (Y_s, \psi_{Y_\disk}  G)   \ar[r] &  R\Gamma (Y_s, \phi_{Y_\disk} [1] G) \ar@{~>}[r]  &,
}
\eeq 
where  the  canonical identification in the middle of the first row stems from  Fact {\rm \ref{jo}} and the fact that base change is an isomorphism for general $t$ (cf. Fact {\rm \ref{gi9}}). 

Note that while this identification depends on the choice of the pre-image $\w{t} \in \w{\disk^*}$
of $t \in \disk^*,$ the resulting restriction morphism $\s_{Y_\disk}: R\Gamma (Y_\disk,G) \to R\Gamma (Y_t,G)$ depends only on $t$,
and not on the choice $\w{t}$: in fact,  monodromy acts on the target, but the domain maps  into the invariants.

Let us emphasize an important piece of diagram {\rm (\ref{jsa})}: (note that the middle term below does not change
when we shrink the disk $\disk$ centered about $s$) 

\beq\la{fk1}
\xymatrix{
R\Gamma (Y_s,i^* G) && R\Gamma (Y_\disk,G) \ar[ll]_{{\rm restriction\, to}\, s}
\ar[rr]^-{{\rm restriction\, to}\, t} &&
 R\Gamma (Y_t, t^*G).
}
\eeq

In general, i.e. without any further hypothesis ensuring that the restriction to $s$ is an isomorphism,   there is no resulting natural morphism
$R\Gamma (Y_s, G) \to R\Gamma (Y_t, G)$.
When restriction to $s$ is an isomorphism,  e.g. when $v_Y$ is proper, then we call the resulting morphism
the specialization morphism:
\beq\la{spmo}
\xymatrix{
R\Gamma (Y_s,G) \ar[r]^{\rm sp}&
 R\Gamma (Y_t,G).
}
\eeq
Of course, even if $v_Y$ is not proper, it may happen that there is a well-defined specialization morphism
for some $G \in D(Y)$.

When the specialization morphism is well-defined, we have  the distinguished triangle:
\beq\la{jsac}
\xymatrix{
 v_* i^* G \simeq i^* v_* G    \ar[r]^-{\rm sp}  &   
 \psi  v_* G \ar[r] &   \phi  v_* G \ar@{~>}[r] &.
}
\eeq

\end{fact}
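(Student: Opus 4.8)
The plan is to deduce Fact \ref{spmz}, and in particular the distinguished triangle (\ref{jsac}), from the base-change formalism of \S\ref{vncf} applied with $v:=v_Y$ itself playing the role of ``$f$'': the complex $v_*G$ on $\disk=S$, regarded over the identity morphism of $\disk$, carries a canonical vanishing-cycles triangle, and the content of the Fact is the translation of its three terms into the cohomology of the fibres $Y_s$ and $Y_t$, together with a check that the resulting data do not depend on the auxiliary choices.

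First I would specialize the commutative square of distinguished triangles (\ref{fdtbis}) to the case $f=v_Y$; then its upper row is the canonical triangle (\ref{1p}) on the base $\disk$ evaluated on $v_*G\in D(\disk)$, and applying $R\Gamma(s,-)$ produces exactly the top row of (\ref{jsa}). Two identifications of its terms are required. For the middle term, the Fundamental Isomorphism of Fact \ref{jo} applied to $v_*G$ gives $\psi_\disk[-1]v_*G\simeq t^*[-1]v_*G$ in $D({\rm pt})$; since $t$ is general (Fact \ref{gi9}), base change then identifies $t^*v_*G$ with $v_*t^*G=R\Gamma(Y_t,t^*G)$, which is the canonical identification $R\Gamma(s,\psi_\disk v_*G)=R\Gamma(Y_t,t^*G)$ recorded in (\ref{jsa}). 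For the left term, the left vertical arrow of (\ref{jsa}) is the base-change map $R\Gamma(s,i^*v_*G)\to R\Gamma(Y_s,i^*G)$; the construction is empty unless this arrow is an isomorphism, which is exactly the stated hypothesis ``restriction to $s$ is an isomorphism'', and which holds automatically when $v_Y$ is proper --- by proper base change (Fact \ref{psphit}(\ref{pp3b})) together with the fact that, $v_*G$ being constructible, the restriction $R\Gamma(\disk,v_*G)\to(v_*G)_s$ is an isomorphism for a suitably small $\disk$.

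Granting both identifications, the restriction map $\s_{Y_\disk}$ appearing in the canonical triangle, transported through them, is by definition the specialization morphism ${\rm sp}\colon R\Gamma(Y_s,G)\to R\Gamma(Y_t,G)$, and the canonical triangle itself becomes (\ref{jsac}). It remains to check that ${\rm sp}$, and hence the triangle, is well posed. The middle identification depends on the lift $\w{t}$ and is a priori defined only modulo the $\pi_1(\disk^*,t)$-monodromy, but the composite ${\rm sp}$ is not: as in (\ref{fk1}) it factors through the honest restriction maps out of $R\Gamma(Y_\disk,G)$ --- to $R\Gamma(Y_s,i^*G)$, which is an isomorphism by hypothesis, and to $R\Gamma(Y_t,t^*G)$, which involves no choice --- so monodromy acts only on the target, while the source maps into the invariants, on which all the identifications agree; since $R\Gamma(Y_\disk,G)$ is moreover unchanged under shrinking $\disk$, the morphism ${\rm sp}$ and the triangle (\ref{jsac}) are independent of the disk as well. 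I expect this bookkeeping --- confirming that the connecting map is canonically ${\rm sp}$ rather than a monodromy-twist of it, and the independence of the auxiliary disk --- to be the only point requiring genuine care; no ingredient beyond the formalism of \S\ref{vncf} enters.
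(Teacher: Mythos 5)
Your proposal is correct and follows essentially the same route as the paper: the Fact is established exactly by specializing the base-change diagram (\ref{fdtbis}) to $f=v_Y$, applying $R\Gamma(s,-)$, identifying the middle term via Fact \ref{jo} together with generic base change (Fact \ref{gi9}), observing that the stalk $(v_*G)_s$ computes $R\Gamma(Y_\disk,G)$ for a small disk so that ``restriction to $s$'' is the relevant arrow (an isomorphism when $v_Y$ is proper, by proper base change), and noting that monodromy acts only on the target while the source maps into the invariants. The only cosmetic slips (writing $\disk=S$, and calling the construction ``empty'' rather than merely failing to produce ${\rm sp}$ when restriction to $s$ is not an isomorphism) do not affect the argument.
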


\begin{fact}\la{splisse}
If $G\in D(S)$ has locally constant cohomology sheaves, then a specialization morphism is an isomorphism. This 
follows at once from Fact \ref{psp}.
\end{fact}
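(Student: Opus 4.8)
The plan is to reduce the claim to the vanishing $\phi\,G=0$ supplied by Fact \ref{psp}, and then to read off from the defining distinguished triangle (\ref{jsac}) that the specialization morphism is an isomorphism.

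First I would place myself in the situation of Remark \ref{idspt}, so that $v_Y$ is the identity $\mathrm{id}_S\colon S\to S$. This morphism is proper, hence by Fact \ref{spmz} the restriction to $s$ appearing in (\ref{fk1}) is an isomorphism; consequently the specialization morphism $\mathrm{sp}\colon R\Gamma(s,i^*G)\to R\Gamma(Y_t,t^*G)$ is genuinely well defined (for any admissible disk $\disk$ and any lift $\w t$ of $t$), and the distinguished triangle (\ref{jsac}) is at our disposal. Since $v_Y$ is the identity, $v_*G=G$, and, because $Y_\disk=\disk$, the functor $\psi$ there is $\psi_\disk$ and $\phi$ is $\phi_\disk$; so (\ref{jsac}) reads $i^*G \xrightarrow{\ \mathrm{sp}\ } \psi_\disk G \to \phi_\disk G \rightsquigarrow$ in $D(Y_s)=D(\mathrm{pt})$.

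Next I would observe that the hypothesis ``$G\in D(S)$ has locally constant cohomology sheaves'' is exactly the input needed for the special case of Fact \ref{psp} highlighted there: with $Y=S$ and $v_Y$ the (smooth) identity, $G$ has locally constant cohomology sheaves on $Y_\disk=\disk$, so $\phi_\disk G=0$. Feeding this vanishing into the triangle above kills its third term, whence $\mathrm{sp}$ is an isomorphism, which is the assertion. One may equally bypass the triangle: by Fact \ref{gh1} the vanishing $\phi\,G=0$ already forces the chain of isomorphisms $i^*[-1]G\xrightarrow{\ \sim\ }\psi_\disk[-1]G\xrightarrow{\ \sim\ } i^![1]G$, and after a shift and the canonical identification $\psi_\disk G\simeq R\Gamma(Y_t,t^*G)$ of Fact \ref{jo} the relevant composite is $\mathrm{sp}$.

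I do not expect a real obstacle. The two points meriting a moment's care are that the hypothesis of Fact \ref{psp} is about local constancy on the \emph{total space} $Y_\disk$, which here is just $\disk$ and hence automatic, and that it is the properness of $v_Y=\mathrm{id}_S$ that legitimizes speaking of ``the'' specialization morphism at all; beyond that the argument is purely formal, which is why the statement is said to follow ``at once'' from Fact \ref{psp}.
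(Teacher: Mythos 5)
Your argument is correct and is exactly the paper's intended one: the identity $S\to S$ is smooth and proper, so Fact \ref{psp} gives $\phi_\disk G=0$, and the distinguished triangle (\ref{jsac}) (equivalently, Fact \ref{gh1}) then forces the specialization morphism to be an isomorphism. The paper leaves precisely this routine verification to the reader, so no further comment is needed.
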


\begin{rmk}\la{noh}
The Choice  {\rm \ref{sch}} is harmless for our purposes: in fact, when defined, the  specialization morphism  depends only on $s$ and on $t \in S^o\setminus \{s\}$. 
Note also that the morphisms {\rm (\ref{ght})} are independent of the choice of the disk $\disk$.
\end{rmk}

$\,$
\section{Perverse filtration and specialization}\la{prep}

If one analyzes the behavior of the perverse filtration in families via the specialization morphism, there are two issues.  First, the specialization morphism is not defined in general if $X$ is not proper over $S$, which is the case for families of Dolbeault moduli spaces. The second issue is that, even when the specialization morphism is defined,  as it is in the case of the Dolbeault moduli spaces, it gives rise to a filtered morphism for the perverse Leray filtrations.  Its failure to be a filtered isomorphism is detected by the filtered cone; however, in general, this is not the filtered cone associated with the  natural morphism of functors $i^* \to \psi$ ($i^*$ the pull-back to the special fiber, $\psi$ the nearby fiber functor). This is due to the fact that perverse truncation does not commute with restriction/pull-back: e.g. when one has a direct summand supported on the special fiber.  

The goal of this section is to prove
Theorem \ref{finit} in \S\ref{cosppf}, which is a criterion to have a well-defined   specialization morphism which is a filtered isomorphism
for the corresponding perverse filtrations.  To this end, 
in \S\ref{cdte}, we study a bit the relationship between perverse truncation and restriction to a Cartier divisor.

$\,$
\subsection{Cartier divisors and  partial $\frak{t}$-exactness}\la{cdte} $\;$

The purpose of this section is to prove Proposition \ref{iotanoc}, especially equation (\ref{tuchm}).
We recommend that  readers skip this section at a first reading. 

The next lemma records some general   $\frak{t}$-exactness properties related to embeddings of effective Cartier divisors on
varieties.

\begin{lm}\la{iotatex}
Let $\iota: T' \to T$ be a closed embedding  of varieties such that the open embedding  $T\setminus T' \to T$ is an affine morphism
(e.g. $T'$ is an effective Weil divisor supporting an effective Cartier divisor). Then: (we omit
 the space variables)
 
 \ben
 \item The functor $\iota^*$   is right $\frak{t}$-exact and the functor $\iota^!$ is left $\frak{t}$-exact, i.e.:
 \beq\la{isrt}
 \xymatrix{
 \iota^*: {^p\!D}^{\leq \bullet}  \to {^p\!D}^{\leq \bullet} , &  \iota^!: {^p\!D}^{\geq \bullet}  \to {^p\!D}^{\geq \bullet}.
 }
 \eeq
 \item
The functor $\iota^*[-1] $ is left  $\frak{t}$-exact and the functor $\iota^! [1]$ is 
right $\frak{t}$-exact:
\beq\la{isrt1}
\xymatrix{
\iota^*: {^p\!D}^{\geq \bullet}  \to {^p\!D}^{\geq \bullet -1}, & \iota^!: {^p\!D}^{\leq \bullet}  \to {^p\!D}^{\leq \bullet +1}.
}
\eeq
\een
\end{lm}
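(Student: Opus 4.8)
\emph{Plan.} The two statements are of a rather different nature: part (1) is formal and holds for an \emph{arbitrary} closed embedding, whereas part (2) is where the affineness of the open complement $j\colon U:=T\setminus T'\to T$ genuinely enters, through Artin's vanishing theorem. I would therefore treat them separately. For part (1), I would argue purely by adjunction, using that for a closed embedding $\iota_*=\iota_!$ is $\frak{t}$-exact. The general principle is that in an adjoint pair with $F$ left adjoint to $G$, the functor $F$ is right $\frak{t}$-exact if and only if $G$ is left $\frak{t}$-exact; this is immediate from $\mathrm{Hom}(FA,B)=\mathrm{Hom}(A,GB)$ together with the characterizations $^p\!D^{\leq 0}=\{A:\mathrm{Hom}(A,{}^p\!D^{\geq 1})=0\}$ and $^p\!D^{\geq 1}=\{B:\mathrm{Hom}({}^p\!D^{\leq 0},B)=0\}$. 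Applying this to $\iota^*\dashv\iota_*$ and the (in particular) left $\frak{t}$-exactness of $\iota_*$ yields that $\iota^*$ is right $\frak{t}$-exact, the first half of (\ref{isrt}); applying it to $\iota_*\dashv\iota^!$ and the right $\frak{t}$-exactness of $\iota_*$ yields that $\iota^!$ is left $\frak{t}$-exact, the second half. No affineness is used here.

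For part (2), I would first observe that the two assertions in (\ref{isrt1}) are exchanged by Verdier duality: since $\iota^!\simeq\mathbb{D}\,\iota^*\,\mathbb{D}$ and $\mathbb{D}$ interchanges $^p\!D^{\leq\bullet}$ and $^p\!D^{\geq-\bullet}$, it suffices to prove that $\iota^*[-1]$ is left $\frak{t}$-exact, i.e. $\iota^*(\,{}^p\!D^{\geq 0})\subseteq{}^p\!D^{\geq -1}$. The key external input is Artin's vanishing theorem: because $j$ is an affine open embedding, $j_!$ is left $\frak{t}$-exact (equivalently, by duality, $j_*$ is right $\frak{t}$-exact). I would combine this with the localization triangle $j_!j^*\to\mathrm{id}\to\iota_*\iota^*\rightsquigarrow$.

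Concretely, let $K\in{}^p\!D^{\geq 0}(T)$. Since $j^*=j^!$ is $\frak{t}$-exact we have $j^*K\in{}^p\!D^{\geq 0}(U)$, and Artin vanishing then gives $j_!j^*K\in{}^p\!D^{\geq 0}(T)$. Thus in the localization triangle both $j_!j^*K$ and $K$ lie in $^p\!D^{\geq 0}(T)$, and the long exact sequence of perverse cohomology sheaves forces the cone $\iota_*\iota^*K$ into $^p\!D^{\geq -1}(T)$: indeed $\ph^i(\iota_*\iota^*K)$ is squeezed between $\ph^i(K)=0$ and $\ph^{i+1}(j_!j^*K)=0$ for $i\leq -2$. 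Finally, since $\iota_*$ is $\frak{t}$-exact and fully faithful, $\ph^i(\iota_*\iota^*K)=\iota_*\ph^i(\iota^*K)$ vanishes if and only if $\ph^i(\iota^*K)$ does, whence $\iota^*K\in{}^p\!D^{\geq -1}(T')$, as wanted; dualizing gives the companion statement for $\iota^![1]$.

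I expect the only genuine subtlety to be the correct invocation of Artin vanishing for the affine open embedding $j$ — the precise $\frak{t}$-exactness statement for $j_!$ and $j_*$ and the essential role played there by affineness — together with the bookkeeping needed to transport the degree estimate from $T$ back down to $T'$ across the $\frak{t}$-exact, fully faithful functor $\iota_*$. Everything else is standard localization-triangle manipulation, and the shift by $1$ in (\ref{isrt1}) is exactly the degree lost when passing from a two-term estimate to the cone.
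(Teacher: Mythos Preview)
Your proposal is correct. Both parts of your argument are sound: for part~(1) the adjunction argument from the $\frak{t}$-exactness of $\iota_*$ is standard and valid; for part~(2) your use of the localization triangle $j_!j^*\to\mathrm{id}\to\iota_*\iota^*\rightsquigarrow$ together with Artin vanishing for the affine open $j$ (giving $j_!$ left $\frak{t}$-exact) correctly pins down $\iota_*\iota^*K\in{}^p\!D^{\geq -1}$, and the transfer to $T'$ via the $\frak{t}$-exact fully faithful $\iota_*$ is fine.

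The paper's route is a bit different in packaging. For part~(1) it simply cites \cite[4.2.4]{bbd}, noting that this holds for any immersion. For part~(2) it quotes \cite[4.1.10.ii]{bbd}, which says that $\iota^*G\in{}^p\!D^{[-1,0]}$ for $G$ perverse, and then runs a descending induction on the truncation triangles $\ph^\bullet[-\bullet]\to\ptu{\bullet}\to\ptu{\bullet+1}\rightsquigarrow$ to pass from the perverse case to the general case. Your argument is more self-contained: you are essentially reproving the relevant half of \cite[4.1.10]{bbd} directly via the localization triangle, and in doing so you get the general statement in one stroke without the auxiliary induction. The paper's approach is shorter on the page because it outsources the work to BBD; yours makes the role of Artin vanishing and the affineness hypothesis completely transparent. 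Both rest on the same underlying input.
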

\begin{proof}
The inequalities (\ref{isrt}) are  \ci[4.2.4]{bbd}),  in fact, they are valid for any immersion.  What follows is specific
to the situation of the Lemma.

We prove the inequalities (\ref{isrt1}).
 Recall that, by  \ci[4.1.10.ii]{bbd}, we have:  if $G \in P(T)$,  than  $\iota^* G \in {^p\!D}^{[-1,0]}$. 
The desired inequality  for $\iota^*$
follows from this fact and  a  simple
descending induction on $\bullet$, by  using $\iota^*$ of  the truncation distinguished  triangles ${\ph}^\bullet [-\bullet] \to \ptu{\bullet} \to \ptu{\bullet +1}
\rightsquigarrow	$, and  $\iota^*{\ph}^{\bullet +1} \in {^p\!D}^{[-1,0]}$.

Since, for $G \in P(T)$, we have that $\iota^! G \in {^p\!D}^{[0,1]}$, the proof for $\iota^!$ is analogous. Alternatively, it can  also be deduced from the one for $\iota^*$ by Verdier duality.
\end{proof}

The following lemma is a technical preliminary to Proposition \ref{iotanoc}.

\begin{lm}\la{trmeglio} {\rm ({\bf  Canonical factorization of $\ptd{\bullet -1} i^* \to \ptd{\bullet} i^*$})}
Let $\iota: T' \to T$ be as in Lemma {\rm \ref{iotatex}}.
The natural morphism $\gamma: \ptd{\bullet -1} \iota^* \to  \ptd{\bullet} \iota^*$ admits a canonical factorization:
\beq\la{nmf}
\xymatrix{
\gamma: \ptd{\bullet -1} \iota^* \ar[r]^-\delta &  \iota^* \ptd{\bullet} \ar[r]^-\e & \ptd{\bullet} \iota^*.
}
\eeq
Similarly, for the dual natural morphism   $\xymatrix{\ptu{\bullet +1} \iota^! &  \ptu{\bullet} \iota^!: \gamma' \ar[l]}$:
\beq\la{nmfd}
\xymatrix{
\ptu{\bullet +1} \iota^!  &  \iota^! \ptu{\bullet}  \ar[l]_-{\delta'}& \ptu{\bullet} \iota^! : \gamma'  \ar[l]_-{\e'} 
}
\eeq
\end{lm}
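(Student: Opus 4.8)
The plan is to produce the factorization of $\gamma$ by exhibiting the morphisms $\delta$ and $\epsilon$ and then checking that their composite is indeed $\gamma$; the dual statement will follow by applying Verdier duality, since $\iota^!$ is Verdier-dual to $\iota^*$ and perverse truncation functors interchange under duality. So I will concentrate on \eqref{nmf}.

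First, I would construct $\epsilon: \iota^* \ptd{\bullet} \to \ptd{\bullet} \iota^*$. By part (1) of Lemma \ref{iotatex}, $\iota^*$ is right $\frak{t}$-exact, so $\iota^* {^p\!D}^{\leq \bullet} \subseteq {^p\!D}^{\leq \bullet}$; hence $\iota^* \ptd{\bullet} G \in {^p\!D}^{\leq \bullet}$, and applying $\iota^*$ to the canonical map $\ptd{\bullet} G \to G$ and composing with the universal property of $\ptd{\bullet}$ on the target $\iota^* G$ gives the canonical $\epsilon$. Next I would construct $\delta: \ptd{\bullet -1} \iota^* \to \iota^* \ptd{\bullet}$. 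Here I apply $\iota^*[-1]$ — which by part (2) of Lemma \ref{iotatex} is left $\frak{t}$-exact, i.e. sends ${^p\!D}^{\geq \bullet}$ into ${^p\!D}^{\geq \bullet - 1}$ — to the truncation triangle $\ptd{\bullet} G \to G \to \ptu{\bullet +1} G \rightsquigarrow$. Shifting back, $\iota^* \ptd{\bullet} G \to \iota^* G \to \iota^* \ptu{\bullet+1} G \rightsquigarrow$, and since $\iota^* \ptu{\bullet+1} G = \iota^*[-1]\ptu{\bullet+1}G\,[1] \in {^p\!D}^{\geq \bullet}$, the object $\iota^* \ptd{\bullet} G$ sits in ${^p\!D}^{\leq \bullet-1}$ only after a further truncation — more precisely $\iota^* \ptd{\bullet} G \in {^p\!D}^{\leq \bullet}$ but its image in $\iota^*G$ together with the fact that the cone lies in ${^p\!D}^{\geq\bullet}$ shows $\ptd{\bullet-1}$ of this map is an isomorphism onto $\ptd{\bullet-1}\iota^*G$; dualizing the point of view, the canonical map $\ptd{\bullet-1}\iota^*G \to \iota^*G$ factors through $\iota^*\ptd{\bullet}G$ because $\iota^*\ptu{\bullet+1}G \in {^p\!D}^{\geq \bullet} \subseteq {^p\!D}^{\geq\bullet}$ means $\mathrm{Hom}(\ptd{\bullet-1}\iota^*G,\iota^*\ptu{\bullet+1}G)=0$. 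This vanishing is the crux: it forces the composite $\ptd{\bullet-1}\iota^*G \to \iota^*G \to \iota^*\ptu{\bullet+1}G$ to be zero, hence $\ptd{\bullet-1}\iota^*G \to \iota^*G$ lifts (uniquely, by another Hom-vanishing) along $\iota^*\ptd{\bullet}G \to \iota^*G$, producing $\delta$.

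Having $\delta$ and $\epsilon$, I would check that $\epsilon \circ \delta = \gamma$. Both $\gamma$ and $\epsilon\circ\delta$ are morphisms $\ptd{\bullet-1}\iota^* \to \ptd{\bullet}\iota^*$ compatible with the canonical maps to $\iota^*$ (i.e. post-composing with $\ptd{\bullet}\iota^* \to \iota^*$ yields in both cases the canonical $\ptd{\bullet-1}\iota^* \to \iota^*$, by construction of $\delta$, $\epsilon$ and $\gamma$). Since $\mathrm{Hom}(\ptd{\bullet-1}\iota^*G, \ptu{\bullet+1}\ptd{\bullet}\iota^*G) = 0$ (the target is in ${^p\!D}^{\geq\bullet+1}$, the source in ${^p\!D}^{\leq\bullet-1}$) — equivalently, $\ptd{\bullet-1}(-) \cong \mathrm{Hom}(\ptd{\bullet-1}\iota^*G,-)$-corepresenting object behaves well — the map $\ptd{\bullet-1}\iota^*G \to \ptd{\bullet}\iota^*G$ is determined by its composite with $\ptd{\bullet}\iota^*G \to \iota^*G$; more carefully, $\mathrm{Hom}(\ptd{\bullet-1}\iota^*G, \ptd{\bullet}\iota^*G) \hookrightarrow \mathrm{Hom}(\ptd{\bullet-1}\iota^*G,\iota^*G)$ because the cone of $\ptd{\bullet}\iota^*G\to\iota^*G$ is in ${^p\!D}^{\geq\bullet+1}$ and so contributes nothing in $\mathrm{Hom}$ and $\mathrm{Hom}^{1}$ from an object of ${^p\!D}^{\leq\bullet-1}$. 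This injectivity gives $\epsilon\circ\delta=\gamma$.

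The main obstacle I anticipate is the canonicity and naturality bookkeeping for $\delta$: one must verify that the lift produced by the Hom-vanishing argument is genuinely canonical (independent of choices, functorial in $G$), which is really a statement that $\delta$ is \emph{the} unique morphism making the triangle with the canonical maps to $\iota^*$ commute — and this uniqueness again rests on the same $\mathrm{Hom}$- and $\mathrm{Hom}^1$-vanishing between ${^p\!D}^{\leq\bullet-1}$ and ${^p\!D}^{\geq\bullet}$ objects. Once that is nailed down, the factorization $\gamma=\epsilon\circ\delta$ and its dual version \eqref{nmfd} are formal. I would also remark explicitly that the roles of $\iota^*[-1]$ (left $\frak{t}$-exact) and $\iota^!$ (left $\frak{t}$-exact) versus $\iota^*$ (right $\frak{t}$-exact) and $\iota^![1]$ (right $\frak{t}$-exact) from Lemma \ref{iotatex} are exactly what makes both the target and source of $\delta$ land in the correct perverse degrees, so no hypothesis beyond that lemma (i.e. the affineness of the open complement) is needed.
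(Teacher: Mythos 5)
Your proposal is correct and follows essentially the same route as the paper: both hinge on the inequality $\iota^*\ptu{\bullet+1}\colon D(T)\to{^p\!D}^{\geq\bullet}(T')$ from Lemma \ref{iotatex}, which kills $\mathrm{Hom}(\ptd{\bullet-1}\iota^*G,\iota^*\ptu{\bullet+1}G)$ and its shift and thereby produces the unique lift $\delta$, with $\epsilon$ the canonical map coming from right $\frak{t}$-exactness of $\iota^*$ and the dual case handled by Verdier duality. The only cosmetic difference is that the paper obtains $\epsilon$ by uniquely completing a morphism of truncation triangles via \cite[Prop.~1.1.9]{bbd}, whereas you invoke the universal property of $\ptd{\bullet}$ directly; these are equivalent.
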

\begin{proof}
We start with the following diagram of distinguished triangles in $D(T')$: (the arrows $\gamma$ and $\delta$ are not part
of either distinguished triangle; they are there to help visualize the situation)
\beq\la{tmeq1}
\xymatrix{
&&\iota^* \ptd{\bullet} \ar[r] \ar@/^0pt/@{-->}[d]^-{\e} & \iota^* \ar[r]  \ar[d]^-=& \iota^* \ptu{\bullet +1} \ar@/^0pt/@{-->}[d] \ar@{~>}[r] &
\\
(\ptd{\bullet -1} \iota^* \ar[r]^-\gamma \ar@/^0pt/@{-->}[rru]^-{\delta} &) & \ptd{\bullet} \iota^*   \ar[r]   &  \iota^*    \ar[r]  &  \ptu{\bullet +1} \iota^*  \ar@{~>}[r] &.
}
\eeq

By the l.h.s. of  (\ref{isrt1}), we have that:
\beq\la{rr}
\iota^* \ptu{\bullet +1}: D(T) \to {^p\!D}^{\geq \bullet}(T').
\eeq
 
By \ci[Prop. 1.1.9, p.23]{bbd}, the diagram (\ref{tmeq1}) can then  be completed uniquely to a morphism
of distinguished triangles; this is visualized by means of the dotted arrows.

The conclusion follows from the  inequality (\ref{rr}), by taking  the long exact sequence associated with
${\rm Hom}_{D(T')} (\ptd{\bullet-1} \iota^*, -)$ applied to the distinguished triangle on the top of
(\ref{tmeq1}):  in fact, for every $G\in D(Y)$, we have
${\rm Hom}_{D(T')} (\ptd{\bullet -1} \iota^*G, -)=0$, when evaluated at something in ${^p\!D}^{\geq \bullet}(T')$.

The proof for $\iota^!$ is analogous.  Alternatively, it can  also be deduced from the one for $\iota^*$ by Verdier duality.
\end{proof}

\begin{rmk}\la{ecz} In view of  the l.h.s. inequality in (\ref{isrt1}),   the r.h.s. vertex in (\ref{tmeq1})   satisfies the inequality
$\iota^* \ptu{\bullet +1}: D \to {^p\!D}^{\geq \bullet}$. 
By taking the long exact sequence of perverse cohomology of  the top distinguished triangle in (\ref{tmeq1}), the aforementioned inequality yields the
natural isomorphism of functors: 
\beq\la{ecz1}
\xymatrix{
\ptd{\bullet -1} \iota^* \ptd{\bullet} \ar[r]^-\simeq & \ptd{\bullet-1} \iota^*.
}
\eeq
\end{rmk}

\begin{pr}\la{iotanoc} {\rm ({\bf No constituents on divisors and $\frak{t}$-exactness})}
Let $\iota: T' \to T$  be as in Lemma {\rm \ref{iotatex}}. 
If $G \in D(T)$ has no constituent supported on $T'$, then the morphisms $\delta$  (cfr. {\rm (\ref{nmf})}) are isomorphisms,
and we get natural isomorphisms: 
\beq\la{tuch}
\xymatrix{
\ptd{\bullet -1} \iota^* G \ar[r]^-\delta_-\simeq &\iota^* \ptd{\bullet} G , & \ptu{\bullet -1} \iota^* G  \ar[r]^-{\simeq} & \iota^* \ptu{\bullet} G 
  , &
{\ph^{\bullet -1}} \iota^* G  \ar[r]^-\simeq & 
\iota^* [-1] { \ph^{\bullet}} G.
}
\eeq
The same holds if we replace $\iota^*$ with $\iota^!$ and $(\bullet -1)$ with $(\bullet +1)$.

Equivalently, we have:
\beq\la{tuchm}
\xymatrix{
\ptd{\bullet} \iota^* [-1]  G \ar[r]^-\delta_-\simeq &\iota^*[-1] \ptd{\bullet}  G , \qquad \ptd{\bullet} \iota^! [1]  G
 & \iota^![1] \ptd{\bullet}  G   \ar[l]_-\delta^-\simeq , &
 {\ph^{\bullet +1}} \iota^! G   & \iota^! [1] { \ph^{\bullet}} G   \ar[l]_-\simeq.
}
\eeq

\end{pr}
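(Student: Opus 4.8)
\emph{Proof proposal.} I would start with the formal reductions. The displays (\ref{tuch}) and (\ref{tuchm}) differ only by shifts, hence are equivalent in view of the commutation (\ref{tsh}) of shifts with perverse truncation and with $\ph^\bullet$; and within (\ref{tuch}) the isomorphisms for $\ptu{\bullet}$ and for $\ph^\bullet$ follow from the one for $\ptd{\bullet}$ by completing the truncation triangles for $\iota^*G$ and $\iota^*\ptd{\bullet}G$ and taking cones. So everything reduces to showing that the canonical morphism $\delta:\ptd{\bullet-1}\iota^*G\to\iota^*\ptd{\bullet}G$ of Lemma \ref{trmeglio}, and its $\iota^!$-dual $\delta'$ from (\ref{nmfd}), is an isomorphism. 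Since $\iota^*$ is right $\frak{t}$-exact (Lemma \ref{iotatex}(1)) one always has $\iota^*\ptd{\bullet}G\in{^p\!D}^{\leq\bullet}(T')$; I claim that, under the hypothesis, this improves to $\iota^*\ptd{\bullet}G\in{^p\!D}^{\leq\bullet-1}(T')$. Granting this sharper bound, (\ref{ecz1}) gives $\iota^*\ptd{\bullet}G=\ptd{\bullet-1}(\iota^*\ptd{\bullet}G)\cong\ptd{\bullet-1}\iota^*G$, an identification realized by $\delta$ (as one checks from its construction in Lemma \ref{trmeglio}); equivalently, $\delta$ induces an isomorphism on every perverse cohomology sheaf and is therefore an isomorphism.

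The one non-formal ingredient is a standard consequence of the recollement formalism, cf. \ci[\S 1.4]{bbd}: if $P\in P(T)$ has no constituent supported on $T'$, then $\ph^0(\iota^*P)=0$, and dually $\ph^0(\iota^!P)=0$. For the first vanishing, recall $\iota^*P\in{^p\!D}^{\leq 0}(T')$ (Lemma \ref{iotatex}(1) again), so that for every $K\in P(T')$ the $(\iota^*,\iota_*)$-adjunction gives
\[
{\rm Hom}_{D(T)}(P,\iota_*K)\;=\;{\rm Hom}_{D(T')}(\iota^*P,K)\;=\;{\rm Hom}_{P(T')}(\ph^0(\iota^*P),K).
\]
If $\ph^0(\iota^*P)$ were non-zero, taking $K$ to be a simple quotient of it would yield a non-zero map $P\to\iota_*K$ whose image is a non-zero perverse quotient of $P$ supported on $T'$, and hence a constituent of $P$ supported on $T'$, contradicting the hypothesis. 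The vanishing $\ph^0(\iota^!P)=0$ follows symmetrically from ${\rm Hom}_{D(T)}(\iota_*K,P)={\rm Hom}_{P(T')}(K,\ph^0(\iota^!P))$, or by Verdier duality, which swaps $\iota^*$ with $\iota^!$, quotients with subobjects, and preserves the condition of having no constituent supported on $T'$ (the Verdier dual of a simple perverse sheaf supported on $T'$ is again such).

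Given this, the sharper bound is immediate. Put $P:=\ph^\bullet(G)$; by hypothesis it has no constituent supported on $T'$. Applying $\iota^*$ to the truncation triangle $\ptd{\bullet-1}G\to\ptd{\bullet}G\to P[-\bullet]\rightsquigarrow$ and passing to perverse cohomology sheaves produces an exact sequence $\ph^\bullet(\iota^*\ptd{\bullet-1}G)\to\ph^\bullet(\iota^*\ptd{\bullet}G)\to\ph^\bullet(\iota^*P[-\bullet])=\ph^0(\iota^*P)$, whose left term vanishes because $\ptd{\bullet-1}G\in{^p\!D}^{\leq\bullet-1}(T)$ and $\iota^*$ is right $\frak{t}$-exact, and whose right term vanishes by the previous paragraph; hence $\ph^\bullet(\iota^*\ptd{\bullet}G)=0$, that is $\iota^*\ptd{\bullet}G\in{^p\!D}^{\leq\bullet-1}(T')$. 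The $\iota^!$-statements are proved identically: applying $\iota^!$ to $P[-\bullet]\to\ptu{\bullet}G\to\ptu{\bullet+1}G\rightsquigarrow$, the left $\frak{t}$-exactness of $\iota^!$ and $\ph^0(\iota^!P)=0$ give $\iota^!\ptu{\bullet}G\in{^p\!D}^{\geq\bullet+1}(T')$, whence $\delta'$ is an isomorphism; alternatively, dualize the argument for $\iota^*$.

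The only step where I expect genuine care to be needed is the bridge in the second paragraph: translating the hypothesis, a statement about the Jordan-Holder constituents of the perverse cohomology sheaves of $G$, into the vanishing $\ph^0(\iota^*P)=0$. The subtlety is that this vanishing is equivalent to $P$ having no non-zero perverse quotient supported on $T'$, which is strictly weaker than having no constituent there; everything downstream is formal manipulation of truncation triangles and of the factorization of $\delta$ supplied by Lemma \ref{trmeglio}.
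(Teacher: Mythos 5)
Your proof is correct and follows essentially the same route as the paper: reduce everything to the single vanishing $\ph^{\bullet}(\iota^*\ptd{\bullet}G)=0$, and obtain it from the right $\frak{t}$-exactness of $\iota^*$ together with the fact that $\ph^{0}(\iota^*P)=0$ for the perverse sheaf $P=\ph^{\bullet}G$, which has no constituent supported on $T'$. The only cosmetic difference is that you re-derive this last vanishing by adjunction and a simple-quotient argument, whereas the paper invokes the epimorphism $\ph^{\bullet}G \to \ph^{0}(\iota^*\ph^{\bullet}G)$ from \ci[4.1.10.ii]{bbd}.
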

\begin{proof}
It is enough to prove (\ref{tuch}): the statement for $\iota^!$ follows by Verdier duality; The equivalent statements are mere
reformulations by means of (\ref{can1}).

It is enough to prove the first statement on the l.h.s. of (\ref{tuch}), for the remaining ones follow formally by consideration of the truncation distinguished triangles.

We have that  $\ptd{\bullet} G \in {^p\!D}^{\leq \bullet}$, so that, by (\ref{isrt}), we have that  $\iota^*\ptd{\bullet} G \in {^p\!D}^{\leq \bullet}$, and then, clearly, we have that
 \beq\la{0129}
 \iota^*\ptd{\bullet} G= \ptd{\bullet} \iota^*\ptd{\bullet} G.
 \eeq
 
 In view of (\ref{0129}) and of (\ref{ecz1}), and by considering the truncation triangle
 $\ptd{\bullet -1} \to \ptd{\bullet} \to {\ph^\bullet}[-\bullet] \rightsquigarrow$ applied to $\iota^*\ptd{\bullet} G$,
 in order to prove the first equality on the lhs in (\ref{tuch}), it is necessary and sufficient  to show that
 $\ph^{\bullet}(\iota^* \ptd{\bullet} G) =0$. 
 
 This can be argued as follows. By taking the long exact sequence of perverse cohomology of the  distinguished triangle
 $\iota^* \ptd{\bullet -1}  G \to \iota^* \ptd{\bullet} G \to \iota^* {\ph^{\bullet}} G [-\bullet]  \rightsquigarrow$, we see
 that it is necessary and sufficient to  show that $\iota^* {\ph^{\bullet}} G [-\bullet] \in {^p\!D}^{\leq \bullet -1}$, or, equivalently, that
 $\iota^* {\ph^{\bullet}} G  \in {^p\!D}^{\leq -1}.$
 
By \ci[4.1.10.ii]{bbd}, we have the distinguished triangle  ${\ph^{-1}} (\iota^* {\ph^{\bullet}} G) [1]
\to \iota^* {\ph^{\bullet}} G \to {\ph^{0}} (\iota^* {\ph^{\bullet}} G)   \rightsquigarrow$ and an epimorphism
$\ph^{\bullet} G \to {\ph^{0}} (\iota^* {\ph^{\bullet}} G)$. Since $G$  is assumed to not have constituents supported at
$Y_s$, we must have $\ph^{0} (\iota^* {\ph^{\bullet}} G)=0$, so that $\iota^* {\ph^{\bullet}} G [-\bullet] \in {^p\!D}^{\leq \bullet -1}$,
as requested. The l.h.s. equality in (\ref{tuch}) follows, and we are done.
\end{proof}

We shall use the following simple lemma, where we employ the notation in \S\ref{vncf}.

\begin{lm}\la{silm}
Let $G \in D(Y)$ be such that $\phi_{Y_\disk} G=0.$ Then no constituent of $G$ is supported on $Y_s.$
\end{lm}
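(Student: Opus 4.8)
The plan is to argue by contradiction using the fact (Fact \ref{psp0}) that $\phi := \phi_{Y_\disk}$ is $\frak{t}$-exact, hence commutes with $\ph^\bullet$, so that a constituent of $G$ supported on $Y_s$ would produce a nonzero summand in some $\phi\,\ph^j G$. More precisely, suppose some simple perverse sheaf $A$ supported on $Y_s$ is a constituent of $G$; then $A$ is a constituent of $\ph^j G$ for some $j$, i.e. $A$ appears in a Jordan--H\"older filtration of the perverse sheaf $P := \ph^j G \in P(Y)$. Since $\phi[0]$ on $P(Y)$ is exact (as $\phi$ is a $\frak{t}$-exact triangulated functor), applying $\phi$ to such a filtration shows that $\phi A$ is a subquotient of $\phi P = \phi\,\ph^j G = \ph^j(\phi G)$, where the last equality uses (\ref{can1}). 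But $\phi G = 0$ by hypothesis, so $\ph^j(\phi G) = 0$, and therefore $\phi A = 0$.

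It then remains to observe that $\phi A \neq 0$ for a nonzero simple perverse sheaf $A$ on $Y$ supported on $Y_s$, which gives the contradiction. Indeed, if $A$ is supported (set-theoretically) on $Y_s = v_Y^{-1}(s)$, then $A$ is the pushforward $k_* A'$ under the closed embedding $k: Y_s \hookrightarrow Y_\disk$ of a nonzero perverse sheaf $A'$ on $Y_s$ (a simple perverse sheaf with support contained in a fiber is extended from that fiber). For such a complex one has $i^*[-1] A = k_* A'$, $\psi[-1] A = 0$ (the nearby cycles of a complex supported on the special fiber vanish, since the nearby-cycle construction only sees a punctured-disk neighborhood), and hence from the distinguished triangle $i^*[-1] \to \psi[-1] \to \phi \rightsquigarrow$ of (\ref{1p}) we get $\phi A \cong i^*[-1] A [1] = k_* A' \neq 0$. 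Alternatively, and perhaps more cleanly, note directly that $i^* A = k_* A' \neq 0$ while the triangle $i^*[-1]A \to \psi[-1]A \to \phi A \rightsquigarrow$ forces $\phi A$ to compute the failure of $i^*[-1]A \to \psi[-1]A$ to be an isomorphism; since $\psi[-1]A = 0$ this failure is exactly $i^*[-1]A[1] \neq 0$.

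The only real point requiring care is the claim that $\psi_{Y_\disk} A = 0$ for $A$ supported on $Y_s$; this is standard — nearby cycles depend only on the restriction of the complex to $Y_\disk \setminus Y_s$, which is zero here — but one should cite it from \ci[Ch. 8,10]{kash} or \ci[XIV]{sga72} rather than reprove it. Everything else is the formal Jordan--H\"older bookkeeping for the exact functor $\phi$ on $P(Y)$ together with (\ref{can1}), so I do not anticipate a substantive obstacle; the statement is essentially a compatibility check between ``supported on $Y_s$'' and the vanishing of $\phi$.
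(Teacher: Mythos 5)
Your proposal is correct and follows essentially the same route as the paper: reduce to perverse constituents via the $\frak{t}$-exactness of $\phi$ and (\ref{can1}), run the Jordan--H\"older argument, and conclude from the fact that a nonzero complex supported on $Y_s$ has nonvanishing vanishing cycles (your triangle computation $\phi A \cong i^*A$ is exactly the paper's cited identity $i_*\phi G' = G'$). Only a cosmetic slip: the intermediate identity should read $i^*A = A'$ (so $i^*[-1]A = A'[-1]$), but this does not affect the conclusion $\phi A \neq 0$.
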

\begin{proof}
By chasing the definitions, we see that we may assume that $G$ is perverse. By using the fact that $\phi$ is $\frak{t}$-exact, and by an easy induction on the length of a Jordan-H\"older filtration
of $G,$ we see that $\phi G'=0$ for every constituent $G'$ of $G.$ The conclusion follows from the fact that if $G' \in D(Y)$ is a non-zero complex supported 
on $Y_s$, then $i_*\phi G'=G'.$
\end{proof}

 By combining Lemma \ref{silm} with  Proposition \ref{iotanoc}, we obtain the following

\begin{cor}\la{dodo}
Let $G \in D(Y)$ be such that $\phi_{Y_\disk} G=0.$ Then we have a natural isomorphism:
\beq\la{zzx}
\delta :\ptd{\bullet} i^*[-1] G \stackrel{\simeq}\to
i^*[-1] \ptd{\bullet} G; \quad  \mbox{similarly, for $\ptu{\bullet}$, and for $\ph^{\bullet}$; ditto, for $i^![1]$.}
\eeq
\end{cor}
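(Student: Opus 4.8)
The plan is to deduce the corollary by feeding Lemma~\ref{silm} into Proposition~\ref{iotanoc}. First I would observe that the closed embedding $i: Y_s \to Y_\disk$ is exactly of the type considered in Lemma~\ref{iotatex}: indeed $s$ is a point on the nonsingular curve $\disk$, so $Y_s = v_{Y_\disk}^{-1}(s)$ is the preimage of an effective Cartier divisor on $\disk$, hence an effective Weil divisor supporting an effective Cartier divisor on $Y_\disk$, and its open complement $Y_\disk \setminus Y_s = Y_{\disk^*}$ is an affine morphism away (the complement of a Cartier divisor). Thus $\iota = i$ satisfies the hypotheses of both Lemma~\ref{iotatex} and Proposition~\ref{iotanoc}.

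Next, the hypothesis $\phi_{Y_\disk} G = 0$ is precisely what Lemma~\ref{silm} converts into the statement that $G$ has no constituent supported on $Y_s$. This is exactly the input hypothesis of Proposition~\ref{iotanoc} (with $T = Y_\disk$, $T' = Y_s$, $\iota = i$). Applying the \emph{equivalent} form~(\ref{tuchm}) of that proposition — i.e. the reformulation via~(\ref{can1}) that already has the shifts $[-1]$ on $i^*$ and $[1]$ on $i^!$ absorbed — gives directly the natural isomorphisms
\[
\delta: \ptd{\bullet} i^*[-1] G \;\xrightarrow{\ \simeq\ }\; i^*[-1] \ptd{\bullet} G,
\qquad
i^![1] \ptd{\bullet} G \;\xrightarrow{\ \simeq\ }\; \ptd{\bullet} i^![1] G,
\]
together with the corresponding statements for $\ptu{\bullet}$ and for the perverse cohomology functors $\ph^{\bullet}$. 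That is the assertion~(\ref{zzx}).

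There is essentially no obstacle here: the corollary is a pure formal combination, and the only thing to check with any care is that the Cartier-divisor hypothesis of Lemma~\ref{iotatex} genuinely applies to $i: Y_s \to Y_\disk$, which it does because $\disk$ is a smooth curve and $\{s\}$ is a Cartier divisor on it, so its preimage is Cartier on $Y_\disk$ and its complement is the complement of a Cartier divisor, hence an affine open immersion. Everything else is citation: Lemma~\ref{silm} supplies the no-constituents condition, Proposition~\ref{iotanoc} supplies the $\frak{t}$-exactness conclusion in the shifted form, and the passage between the $\ptd{\bullet}$, $\ptu{\bullet}$ and $\ph^{\bullet}$ versions is the standard truncation-triangle argument already invoked inside the proof of Proposition~\ref{iotanoc}.
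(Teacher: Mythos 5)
Your proposal is correct and is exactly the paper's argument: the paper derives Corollary~\ref{dodo} in one line by feeding the no-constituents conclusion of Lemma~\ref{silm} into the shifted form~(\ref{tuchm}) of Proposition~\ref{iotanoc}. Your additional verification that $i$ satisfies the affine-complement hypothesis of Lemma~\ref{iotatex} is a sensible detail the paper leaves implicit (note only that the proposition is most naturally applied with $T=Y$ and $T'=Y_s$, the complement being the preimage of the affine open $S\setminus\{s\}$, rather than with the analytic open set $Y_\disk$).
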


$\,$
\subsection{Compatibility of the specialization morphism with  the perverse filtration}\la{cosppf}$\;$

\begin{tm}\la{finit}
Let $f:X\to Y$ be proper morphism, let  $v_Y: Y\to S$ be a  morphisms onto a nonsingular connected curve and let $F \in D(X)$. 
Choose $s,t \in  S$ and a disk $\disk$ as in Choice \ref{sch}.  Assume one of the following conditions:
\ben
\item[(i)]
$\phi  F=0$ and $v_Y$ is proper, or

\item[(ii)] $\phi F=0$ and the $v_* \ptd{\bullet} f_* F$ have locally constant cohomology sheaves on $S$, or

\item[(iii)] $F$ is semisimple, $\phi F=0$ and  $v_* f_* F$ has locally constant cohomology sheaves on $S$.
\een

Then the specialization morphism is well-defined and it is a filtered isomorphism for the respective perverse Leray filtrations:
\beq\la{5050}
 \xymatrix{
 {\rm sp}: 
 \left(R\Gamma (X_s, i^* F), P^{f_s}\right) \ar[r]^-\simeq & \left(R\Gamma (X_t, t^* F), P^{f_t}\right).  
 }
 \eeq
 \end{tm}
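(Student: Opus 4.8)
The plan is to derive the theorem from the commutative diagram \eqref{jsac} applied to $f_*F$, combined with the partial $\mathfrak{t}$-exactness results of \S\ref{cdte}. The core observation is that, whenever $\phi F = 0$, the decomposition theorem (Theorem \ref{dt}) is not even needed to control $\phi$ of the truncations: since $\phi_X$ and $\psi_X[-1]$ are $\mathfrak{t}$-exact (Fact \ref{psp0}), they commute with $\ptd{\bullet}$, and the base change isomorphism \eqref{fdtbis} for the proper $f$ gives $\phi_Y f_* \ptd{\bullet} F \simeq f_* \phi_X \ptd{\bullet} F \simeq f_* \ptd{\bullet} \phi_X F = 0$. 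Thus $f_* F$, and each $\ptd{\bullet} f_* F$, has vanishing vanishing-cycle functor, so by Lemma \ref{silm} no constituent of $f_*F$ (nor of any $\ptd{\bullet} f_* F$) is supported on $Y_s$. Corollary \ref{dodo} then applies to $G := f_* F$: restriction-to-$s$ commutes with perverse truncation, in the sense that $\ptd{\bullet} i^*[-1] f_* F \xrightarrow{\ \simeq\ } i^*[-1] \ptd{\bullet} f_* F$, and likewise $t^*[-1]$ commutes with perverse truncation by Fact \ref{vbfz}.

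Next I would set up the comparison. On the target side, $R\Gamma(X_t, t^*F) = R\Gamma(Y_t, t^* f_* F)$ (proper base change for $f$), and by Fact \ref{vbfz} its perverse filtration $P^{f_t}$ is computed by $R\Gamma(Y_t, t^*[-1]\ptd{\bullet}[1] f_* F)$ — i.e. $t^*$ commutes with the perverse truncation defining $P^{f_t}$. On the source side, $R\Gamma(X_s, i^*F) = R\Gamma(Y_s, i^* f_* F)$, and by Corollary \ref{dodo}, $i^*$ also commutes with the perverse truncation (up to the shift bookkeeping of \eqref{tsh}), so $P^{f_s}$ is computed by applying $i^*[-1]$ to the truncation tower $\ptd{\bullet} f_* F$ of $f_* F$ on $Y$. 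Now run \eqref{jsac}/\eqref{jsa} with $v_Y$ in place of $f$, applied not to $f_* F$ itself but to each truncation level $\ptd{\bullet} f_* F \in D(Y)$: in each of cases (i), (ii), (iii) one checks that restriction to $s$, i.e. $R\Gamma(Y_s, i^* \ptd{\bullet} f_* F) \leftarrow R\Gamma(Y_\disk, \ptd{\bullet} f_* F)$, is an isomorphism. In case (i) this is immediate since $v_Y$ is proper. In case (ii) the complexes $v_* \ptd{\bullet} f_* F$ are assumed lisse on $S$, so Fact \ref{splisse} gives the specialization isomorphism for each truncation level directly. In case (iii) one first invokes Theorem \ref{dt}: $F$ semisimple and $f$ proper forces $f_* F$ semisimple, hence $f_* F \simeq \bigoplus_i {\ph^i}(f_* F)[-i]$ and each $\ptd{\bullet} f_* F$ is a direct sum of shifts of the perverse cohomology sheaves ${\ph^i}(f_* F)$; since $v_* f_* F$ is lisse, each direct summand $v_* {\ph^i}(f_* F)[-i]$ is lisse (a direct summand of a lisse complex is lisse), hence each $v_* \ptd{\bullet} f_* F$ is lisse, reducing case (iii) to case (ii).

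Having arranged this, the proof assembles as follows. Form the commutative ladder obtained by applying $R\Gamma$ to the system of truncation morphisms $\ptd{\bullet} f_* F \to \ptd{\bullet+1} f_* F \to \cdots \to f_* F$, in the three parallel columns "restrict to $s$", "on $Y_\disk$", "restrict to $t$" of \eqref{fk1}. Each horizontal "restrict to $s$" map is an isomorphism by the case analysis above; each "restrict to $t$" map is the relevant specialization-type map. Because $i^*[-1]$ (resp.\ $t^*[-1]$) commutes with $\ptd{\bullet}$ by Corollary \ref{dodo} (resp.\ Fact \ref{vbfz}), the filtration $P^{f_s}$ on $R\Gamma(X_s, i^*F)$ is precisely the image of the filtration on $R\Gamma(Y_\disk, f_* F)$ induced by the tower $\ptd{\bullet} f_* F$, transported through the "restrict to $s$" isomorphism; and similarly $P^{f_t}$ is the image of that same filtration transported through "restrict to $t$". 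It remains to see "restrict to $t$" is an isomorphism at the level of $f_* F$ itself and strictly compatible with the filtrations. For the isomorphism: in case (i) $v_Y$ proper plus Fact \ref{gogo} makes $t^*[-1] f_* F \simeq t^![1] f_* F$ and the specialization an isomorphism on cohomology; in cases (ii)/(iii) $v_* f_* F$ lisse plus Fact \ref{splisse} gives it directly. Strict compatibility then follows because the filtration on each side is, by construction, the $\ptd{\bullet}$-induced one and the isomorphism is induced levelwise by the already-established isomorphisms on $\ptd{\bullet} f_* F$; so passing to the associated graded, ${\rm Gr}^{P^{f_s}}_\bullet$ and ${\rm Gr}^{P^{f_t}}_\bullet$ are both identified with $R\Gamma$ of $i^*[-1]$, resp.\ $t^*[-1]$, of ${\ph^\bullet} f_* F$, and these are isomorphic by Corollary \ref{dodo} together with the case analysis.

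The main obstacle I anticipate is bookkeeping rather than conceptual: making sure that the isomorphism ${\rm sp}$ at the level of total cohomology is genuinely \emph{strict} for the two filtrations, not merely a filtered morphism inducing injections on graded pieces. This is exactly the subtlety flagged at the start of \S\ref{prep} — perverse truncation need not commute with restriction, and a summand supported on $Y_s$ would wreck strictness. The hypothesis $\phi F = 0$ is what rules this out, via Lemma \ref{silm} and Corollary \ref{dodo}; so the real work is to verify carefully that Corollary \ref{dodo} is being applied to the \emph{right} complexes (the truncations $\ptd{\bullet} f_* F$, which also satisfy $\phi = 0$ by $\mathfrak{t}$-exactness and proper base change), and that the three conditions (i)--(iii) each deliver the "restriction to $s$ is an isomorphism" input with the filtration tower intact. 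Once those are in place, chasing the ladder of \eqref{fk1} levelwise yields \eqref{5050} and, in particular, the asserted equality of dimensions of graded pieces.
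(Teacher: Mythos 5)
Your overall architecture matches the paper's: establish that $\phi$ vanishes on $f_*F$ and on its perverse truncations, use Corollary \ref{dodo} (resp.\ Fact \ref{vbfz}) to commute $i^*[-1]$ (resp.\ $t^*[-1]$) past $\ptd{\bullet}$, and then show that specialization is an isomorphism levelwise on the truncation tower, hence an isomorphism in the filtered derived category. Case (i) is fine, and your reduction of (iii) to (ii) via the decomposition theorem (summands of lisse complexes are lisse) is exactly the sense in which the paper calls (iii) ``weaker than'' (ii). One small slip: you write $\phi_Y f_*\ptd{\bullet}F \simeq f_*\ptd{\bullet}\phi_X F$ with the truncation taken on $X$, whereas the relevant object is $\ptd{\bullet}f_*F$ with the truncation on $Y$; the correct chain is $\phi_Y\ptd{\bullet}f_*F=\ptd{\bullet}\phi_Y f_*F=\ptd{\bullet}f_*\phi_X F=0$, which uses the same two facts, so the conclusion you draw is right.

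The genuine gap is in case (ii), i.e.\ precisely the non-proper case that the Hitchin application needs. You claim the restriction $R\Gamma(Y_\disk,\ptd{\bullet}f_*F)\to R\Gamma(Y_s,i^*\ptd{\bullet}f_*F)$ is an isomorphism ``by Fact \ref{splisse}'', but that Fact lives entirely on the base: for lisse $K=v_*\ptd{\bullet}f_*F$ it identifies $R\Gamma(s,i^*K)$ with $R\Gamma(t,t^*K)$. To pass from $R\Gamma(s,i^*v_*G)$ to $R\Gamma(Y_s,i^*G)$ you still need the base change morphism $i^*v_*G\to v_*i^*G$ to be an isomorphism, and for non-proper $v$ this is exactly the point at issue; neither the vanishing of $\phi\, v_*G$ (lisseness on $S$) nor of $v_*\phi\, G$ (from $\phi F=0$) yields it by itself, since the nearby-cycle base change $\psi\, v_*\to v_*\psi$ is not an isomorphism in general. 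The paper closes this by a chase in the two-square diagram (\ref{arce}) obtained from (\ref{r00}): the right-hand vertical arrow $i^![1]v_*\to v_*i^![1]$ (morphism $5$) is \emph{always} an isomorphism for the closed embedding $i$; since the horizontal cones $\phi\, v_*\ptd{\bullet}f_*F$ and $v_*\phi\,\ptd{\bullet}f_*F$ both vanish, the right square forces the nearby-cycle base change ($4$) to be an isomorphism, and then the left square forces $i^*v_*\to v_*i^*$ ($3$) to be one. Without this $i^!$-side argument (or a substitute for it), the specialization morphism in case (ii) --- and hence in case (iii) --- is not shown to be well-defined; you should supply this step.
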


\begin{proof}
By applying (\ref{r00}) to the $\ptd{\bullet} f_*$, 
we obtain the following commutative diagrams: 
\beq\la{arce}
\xymatrix{
i^*[-1] v_* \ptd{\bullet} f_* \ar[r]^-1  \ar[d]^-3 & \psi [-1] v_* \ptd{\bullet} f_*  \ar[d]^-4   \ar[r]^-2  & i^![1] v_* \ptd{\bullet} f_* \ar[d]^-=_-5
\\
 v_* i^*[-1] \ptd{\bullet} f_* \ar[r]^-{1'}  \ar@/0pt/@{-->}[ur]^-{\rm sp?}  &  v_*  \psi [-1]  \ptd{\bullet} f_*    \ar[r]^{2'}  &  v_*   i^![1] \ptd{\bullet} f_* 
}
\eeq
Up to shift:  the cones of $1$ and $2$ coincide with $\phi v_* \ptd{\bullet} f_*$; the cones of $1'$ and $2'$ coincide with 
$v_*\phi \ptd{\bullet} f_*$.

Let us prove (i). 

In this case, we only need the commutative square on the l.h.s. of (\ref{arce}).

Since $v$ is assumed to be proper, the base change morphisms $3$ and $4$ are isomorphisms. In particular,
the specialization morphism  ${\rm sp}$ is well-defined and it gives rise to a   system of compatible morphisms.

The assumption $\phi \, F=0$, which  is common to (i) and (ii),  implies,  by the $\frak t$-exactness of $\phi$ and the properness of $f$, that 
  $v_*\phi \ptd{\bullet} f_*F=  v_* \ptd{\bullet} f_* \phi F =0$, i.e. that  the cone of $1'$ is zero, so that so is the cone of $1$.
  
  It follows that (i) is a  system of compatible isomorphisms and, as such, it gives rise to an isomorphism
  in the filtered derived category.
  
  The filtered complex $R\Gamma (\mo_s, i^*[-1] F), P^{f_s})$ arises in connection with the cohomology of the compatible system of complexes
$v_* \ptd{\bullet} f_* i^*[-1] F)$. Similarly, the filtered complex $R\Gamma (\mo_t, t^*[-1] F), P^{f_t})$ arises in connection with the cohomology of the compatible system of complexes
$v_* \ptd{\bullet} f_* t^*[-1] F)$.

It remains to identify:
\ben
\item[(a)]
 $v_* i^*[-1]  \ptd{\bullet} f_* F$ with $v_*  \ptd{\bullet} f_* [-1] i^*F$, and
 \item[(b)]
 $\psi [-1] v_*   \ptd{\bullet} f_* F$ with $v_*  \ptd{\bullet} f_* t^* [-1] F$.
 \een

To achieve (b), we argue as follows.
The choice of $t$ general for  $F,$ made in Definition
\ref{tge}, allows us to:  
replace $\psi $ with $t^*$ (cf. (\ref{000})); use the identification $t^*v_*=v_* t^*$; use the identification 
$t^*[-1] \ptd{\bullet} = \ptd{\bullet} t^*[-1]$ (cf. \ref{vbgfz}));  use the identification $t^*v_*=v_* t^*$. Then (b) follows.

To achieve (a), we argue as follows. We first apply Corollary \ref{dodo} to $G=f_* F$; the condition $\phi f_* F=0$ is met
in view of the properness of $f$ and of  $\phi F=0$. We then apply proper base change $i^* f_* \stackrel{\simeq}\to f_* i^*$.  
Then (a) follows, and (i) is proved.

Let us prove (ii).

As it has been seen above, the assumption $\phi F=0$ implies that the cones of  $1'$ and $2'$ are zero, so that $1'$ and $2'$
are isomorphisms.

Since we are assuming that the $v_* \ptd{\bullet} f_* F$ have locally constant cohomology sheaves on $\disk$, we have that
the cones of $1$ and $2$ are zero as well (cf. Fact \ref{psp}), so that $1$ and $2$ are isomorphisms.

Since the morphism $5$ is an isomorphism, all the morphisms in (\ref{arce}) are isomorphisms.

We conclude as in case (i).

Case (iii) is weaker than case (ii); we can also prove it without resorting to the use of Corollary \ref{dodo}. The proof is very similar, except 
that in order to achieve the critical commutation property {\rm (a)} via Corollary \ref{dodo},  we use that: 
$F$ semisimple implies $f_*F$ semisimple (cf. the decomposition Theorem \ref{dt}); the assumption $\phi F=0$ implies that no simple summand
of $f_* F$ is supported on $Y_s$; the commutation property for a simple, un-shifted, simple perverse summand $P$, which  we know not to be  supported
on $Y_s$ follows from \ci[4.10.1]{bbd}, to the effect that $i^*[-1]P$ is perverse.

\end{proof}

$\,$
\section{The Hitchin morphism and specialization}\la{rrr}$\;$

\subsection{The Betti and Dolbeault moduli spaces: the $P=W$ conjecture}\la{snaht}$\;$

Let $X\to S$ be a smooth  projective morphism  over a  variety $S$ and let $G$ be a reductive group.

  The Betti moduli space $M_B(\mo/S,G)$ (cf. \ci[pp.12-15]{si2})  is a complex analytic space over $S$. The fiber over a point 
$s\in S$ is the moduli space (a.k.a. the character variety)  $M_B(\mo_s,G)$ of representations of the fundamental group of $\mo_s$ into $G.$ 

The Dolbeault moduli space $M_D(\mo/S,G)$ (cf. \ci[\S9, esp. Prop. 9.7]{si2})
is  quasi-projective over $S$; in general, it is not proper over $S$. The fiber over a point
$s\in S$ is the moduli space  $M_D(\mo_s,G)$ of principal  Higgs bundles of semiharmonic type on $\mo_s$ for the group $G$.
 
\begin{fact}\la{naht}
{\rm (\bf Non-Abelian Hodge Theorem)} There is a natural $S$-homeomorphism of the underlying topological spaces: 
(cf. \ci[Thm. 9.11, and Lm. 9.14]{si2})
\beq\la{enaht}
\xymatrix{
\Psi (\mo/S,G): M_B (\mo/S,G) \ar[r]^-\simeq & M_D (\mo/S,G).
}
\eeq

\end{fact}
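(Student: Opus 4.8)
The plan is to factor $\Psi(\mo/S,G)$ through the relative de Rham moduli space $M_{dR}(\mo/S,G)$ of principal $G$-bundles with integrable connection along the fibers of $\mo/S$, writing $\Psi = \Psi_D \circ \Psi_B^{-1}$, where $\Psi_B\colon M_{dR}(\mo/S,G)\to M_B(\mo/S,G)$ is the Riemann--Hilbert correspondence and $\Psi_D\colon M_{dR}(\mo/S,G)\to M_D(\mo/S,G)$ is the Corlette--Simpson (Kobayashi--Hitchin) correspondence; one then checks that $\Psi_B$ is a biholomorphism over $S$ and that $\Psi_D$ is a homeomorphism over $S$. Simpson constructs all three spaces as (relative) schemes, resp.\ analytic spaces, over $S$ \ci{si2}, so the content is entirely in the two comparison maps.

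First I would treat $\Psi_B$. Since $\mo\to S$ is smooth projective, Ehresmann's theorem exhibits it as a locally trivial $C^\infty$ fiber bundle, so the fundamental groups $\pi_1(\mo_s)$ form a local system of groups on $S$ and, locally over $S$, $M_B(\mo/S,G)$ is the character variety of a fixed finitely presented group. Sending a flat connection to the conjugacy class of its monodromy is an $S$-morphism $M_{dR}(\mo/S,G)\to M_B(\mo/S,G)$; fiberwise it is the classical Riemann--Hilbert equivalence (a biholomorphism of analytic spaces), and the inverse is analytic in families because one can solve the horizontality equation $\nabla s = 0$ with holomorphic dependence on parameters. This produces the first factor, compatibly with the $S$-structure.

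The analytic heart is $\Psi_D$. Over a fixed fiber $\mo_s$: a flat $G$-bundle is semisimple if and only if it admits a harmonic reduction of structure group (Corlette, Donaldson), and a principal $G$-Higgs bundle is of semiharmonic type, i.e.\ polystable with vanishing Chern classes, if and only if it carries a Hermitian--Einstein (harmonic) metric (Hitchin, Simpson); in each case the metric is unique up to automorphisms. From a semisimple flat $(E,\nabla)$ with harmonic metric $h$ one extracts a holomorphic structure and a Higgs field $\theta$ (the $(1,0)$-part of $\tfrac12(\nabla-\nabla^{*_h})$), hence a point of $M_D(\mo_s,G)$; conversely a polystable Higgs bundle with its metric gives back $\nabla=\bar\partial+\partial_h+\theta+\theta^{*_h}$. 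This is the fiberwise homeomorphism; the case of a general reductive $G$ reduces to $GL_n$ via a faithful representation together with Simpson's formalism of principal objects, the point being that a harmonic metric on the associated $GL_n$-object reduces to the $G$-structure.

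The main obstacle --- and the only genuinely hard part --- is to upgrade the fiberwise statement for $\Psi_D$ to an $S$-homeomorphism, i.e.\ to prove that the harmonic metric, and hence $\Psi_D$ and $\Psi_D^{-1}$, depend continuously on the moduli point and on $s\in S$ as the complex structure of the fiber $\mo_s$ varies. This requires uniform a priori estimates and a compactness/continuity analysis for the harmonic-map (Hermitian--Einstein) equation with varying parameters, ruling out energy concentration and controlling degeneration; semicontinuity of the energy functional and properness of the Hitchin map over $S$ are the relevant tools. This is exactly what is carried out in \ci[Thm.~9.11 and Lm.~9.14]{si2}, to which I would appeal; combined with the biholomorphism $\Psi_B$ from Riemann--Hilbert, this yields the asserted natural $S$-homeomorphism $\Psi(\mo/S,G)$.
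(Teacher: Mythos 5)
The paper does not prove this statement: it is recorded as a Fact and attributed directly to Simpson \ci[Thm.~9.11, Lm.~9.14]{si2}, and your outline (factoring through the de Rham space, Riemann--Hilbert plus the Corlette--Simpson correspondence, with the continuity in families as the hard analytic point) is a faithful sketch of exactly the argument in that reference, to which you too ultimately appeal for the essential step. So your proposal is correct and takes essentially the same route as the paper, namely resting on Simpson's relative construction.
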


To fix ideas, in what follows, we tacitly assume that $S$ and the fibers of the family $\mo/S$ are connected; such an assumption
is for ease of exposition only; see \ci[pp. 14-15]{si2}.

Choose any point $s_o\in S$.
The structural morphism $\pi_B (\mo/S,G) : M_B (\mo/S,G) \to S$ is analytically locally trivial over $S$, with transition functions with values in the
group of  $\comp$-scheme
automorphisms of the fiber $M_B(\mo_{s_o},G)$; see \ci[Lm 6.2, p.13]{si2}. More precisely: let $(\w{S}, \w{s_o}) \to (S, s_o)$ be a pointed universal covering space with associated identification of the deck transformation group with the fundamental group
$\pi_1 (S, s_o)$;   the fundamental group acts on $M_{s_o}$ via $\comp$-scheme automorphisms;
$M_B(\mo/S,G)$
is constructed as the quotient of $M_B(\mo_{s_o},G) \times \w{S}$ under the usual action of the fundamental group $\pi_1(S, s_o)$.

\begin{fact}\la{dltr}
{\rm (\bf Local triviality of the Dolbeault moduli space over the base)}
The local triviality of the Betti moduli space over the base, coupled with the Non-Abelian Hodge Theorem 
$S$-homeomorphism $\Psi$ {\rm (\ref{enaht})}, implies that the structural morphism $\pi_D (\mo/S,G) : M_D(\mo/S,G) \to S$ is topologically  locally trivial
over the base $S$.
\end{fact}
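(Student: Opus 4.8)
The plan is to deduce Fact \ref{dltr} formally from the two inputs just recalled: the local triviality of $\pi_B(\mo/S,G)$ over $S$, and the Non-Abelian Hodge $S$-homeomorphism $\Psi=\Psi(\mo/S,G)$ of Fact \ref{naht}. There is no hard step here; the point is only to make the transport of structure precise.

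First I would record what the Betti side gives. Fix $s_o\in S$. By \ci[Lm. 6.2, p.13]{si2}, $M_B(\mo/S,G)$ is the quotient of $M_B(\mo_{s_o},G)\times \w{S}$ by the diagonal action of $\pi_1(S,s_o)$ (acting on $M_B(\mo_{s_o},G)$ through $\comp$-scheme automorphisms and on $\w{S}$ by deck transformations); in particular $\pi_B(\mo/S,G)\colon M_B(\mo/S,G)\to S$ is analytically, hence a fortiori topologically, locally trivial over $S$: every $s\in S$ has an open neighborhood $U\subseteq S$ together with a homeomorphism $\pi_B(\mo/S,G)^{-1}(U)\xrightarrow{\ \simeq\ } U\times M_B(\mo_{s_o},G)$ over $U$.

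Next I would exploit that $\Psi$ is a homeomorphism over $S$, which is exactly the content of Fact \ref{naht}. Being over $S$, for every open $U\subseteq S$ it restricts to a homeomorphism $\pi_B(\mo/S,G)^{-1}(U)\xrightarrow{\simeq}\pi_D(\mo/S,G)^{-1}(U)$ commuting with the two projections to $U$, and for every point $s\in S$ it restricts to a homeomorphism $\Psi_s\colon M_B(\mo_s,G)\xrightarrow{\simeq} M_D(\mo_s,G)$ on fibers. Composing over a trivializing neighborhood $U$ of a given $s\in S$ as in the previous paragraph, one obtains a homeomorphism over $U$:
\[
\pi_D(\mo/S,G)^{-1}(U)\ \xrightarrow[\ \simeq\ ]{\ \Psi^{-1}\ }\ \pi_B(\mo/S,G)^{-1}(U)\ \xrightarrow{\ \simeq\ }\ U\times M_B(\mo_{s_o},G)\ \xrightarrow[\ \simeq\ ]{\ \mathrm{id}_U\times\Psi_{s_o}\ }\ U\times M_D(\mo_{s_o},G).
\]
Since $s\in S$ was arbitrary, $\pi_D(\mo/S,G)$ is topologically locally trivial over $S$, as asserted.

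The only point that needs care — and it is genuinely part of the input Fact \ref{naht} rather than an obstacle to be overcome here — is that $\Psi$ really is compatible with the structural morphisms to $S$, so that it simultaneously restricts to fiberwise homeomorphisms $\Psi_s$ and carries local trivializations of $\pi_B$ to local trivializations of $\pi_D$; once this is granted the argument is purely formal, and the passage from ``analytically locally trivial'' to ``topologically locally trivial'' is automatic, so one may work throughout in the topological category.
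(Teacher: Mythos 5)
Your argument is correct and is exactly the paper's (implicit) justification: the statement is recorded as a Fact whose content is precisely that the analytic local triviality of $\pi_B(\mo/S,G)$ from \ci[Lm. 6.2]{si2} is transported through the $S$-homeomorphism $\Psi$ of Fact \ref{naht} to give topological local triviality of $\pi_D(\mo/S,G)$, which is what you spell out. The only cosmetic remark is that the final composition with $\mathrm{id}_U\times\Psi_{s_o}$ is optional, since local triviality does not require the typical fiber to be presented as $M_D(\mo_{s_o},G)$ rather than the homeomorphic space $M_B(\mo_{s_o},G)$.
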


Recall that, for irreducible varieties,  the intersection cohomology complexes/groups
are  topological invariants, independent of the stratification (cf. \ci[\S4.1]{goma2}).
Note that \ci{goma2} deals with irreducible analytic varieties; 
on the other hand,  as the forthcoming  Lemma \ref{ictop} shows, if we define the intersection complex of a variety as the direct sum
of the intersection complex of its  irreducible components, then the topological invariance of the intersection cohomology complexes/groups is still valid.

In particular, given a topologically locally trivial fibration with fibers varieties, the intersection cohomology
groups of the fibers give rise to locally constant sheaves on the base.

We thank G. Williamson for suggesting the definition of the set $X'$ in the proof of the following lemma. Our original 
set $X'$ was defined using the notion of local irreducibility and lead to a more cumbersome proof.

\begin{lm}\la{ictop}
{\rm ({\bf Topological invariance of   intersection cohomology for reducible varieties})}
Let $X$ and $Y$ be complex analytic set and let $g:X\simeq Y$ be a homeomorphism of the underlying topological spaces
endowed with the classical topology. Then:

\ben
\item
The homeomorphism $g$ induces a natural bijection $\gamma: I \simeq B $ on the sets of irreducible components 
of $X$ and $Y$ such that $g$ induces homeomorphisms $X_i \simeq Y_{\gamma (i)}$, for every $i \in I$.
\item
Define the intersection complex  $IC_X$ of a complex analytic set $X$ to be the direct product of the intersection  cohomology complexes
of the irreducible components of $X$. Then the homeomorphism $u$ induces a natural  isomorphism
$IC_X = u^* IC_Y$.
\een
\end{lm}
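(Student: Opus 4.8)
The statement is a purely topological invariance result for intersection cohomology of reducible complex analytic sets, and the whole point is to reduce it to the classical invariance theorem for irreducible varieties (Goresky–MacPherson, as cited in \cite{goma2}). My plan is to first establish part (1), the matching of irreducible components, since part (2) is really a formal consequence once the components are identified topologically.

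For part (1): the issue is that a homeomorphism $g : X \simeq Y$ of the underlying topological spaces need not a priori respect the decomposition into irreducible components, because irreducibility is an algebraic/analytic notion. The fix, following the suggestion of Williamson credited in the text, is to find a \emph{topological} characterization of the decomposition. I would define $X' \subseteq X$ to be the set of points $x$ such that $x$ lies on exactly one irreducible component — equivalently, the set of points having a connected (or irreducible) local neighborhood in a suitable sense — and argue that $X'$ is exactly $X$ minus the (closed, nowhere dense) union of pairwise intersections $X_i \cap X_j$, $i \neq j$. The key topological facts to check are: (a) $X'$ is open and dense in $X$; (b) the connected components of $X'$ are precisely the sets $X_i \setminus \bigcup_{j \neq i} X_j$, one for each irreducible component $X_i$; and (c) these are characterized up to homeomorphism-invariant data. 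Since $g$ is a homeomorphism, it carries $X'$ to $Y'$ and sets up a bijection on connected components, hence a bijection $\gamma : I \simeq B$; taking closures of components recovers $g(X_i) = \overline{g(X_i \setminus \bigcup_{j\neq i} X_j)} = \overline{Y_{\gamma(i)} \setminus \bigcup Y_{\text{other}}} = Y_{\gamma(i)}$, and the restriction $X_i \simeq Y_{\gamma(i)}$ is a homeomorphism. The main obstacle here is purely point-set: being careful that the local reducibility set is genuinely topological and behaves well — in particular that a point lying on two components cannot have a neighborhood homeomorphic to a neighborhood of a point on only one component, which uses that each $X_i$ is a (possibly singular) complex analytic set and so locally looks like a cone, and the local structure detects how many top-dimensional branches pass through. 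I'd phrase this cleanly using the set $X'$ rather than "local irreducibility" exactly as the text hints.

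For part (2): once we have homeomorphisms $g_i : X_i \simeq Y_{\gamma(i)}$ for each $i$, the classical topological invariance of intersection cohomology for \emph{irreducible} analytic varieties gives canonical isomorphisms $IC_{X_i} \simeq g_i^* IC_{Y_{\gamma(i)}}$ in $D(X_i)$ (this is the content of \cite[\S4.1]{goma2}). Then, since $IC_X$ is \emph{defined} as the direct product $\bigoplus_{i \in I} (\text{pushforward to }X \text{ of }) IC_{X_i}$ and likewise for $Y$, and since $g$ matches up the components and restricts to the $g_i$, pulling back commutes with the direct sum and with the pushforwards along closed embeddings $X_i \hookrightarrow X$; assembling the $g_i^*$-isomorphisms component by component yields the desired $IC_X \simeq g^* IC_Y$. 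This step is essentially bookkeeping: the only thing to be slightly careful about is that $g$ need not preserve the stratifications used to build each $IC_{X_i}$, but that is exactly what the cited topological-invariance theorem handles, so there is no real difficulty. I expect part (1) — pinning down the topological description of the components via $X'$ — to be the substantive part of the argument, and part (2) to follow formally.
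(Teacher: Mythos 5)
Your reduction of part (2) to the irreducible case of Goresky--MacPherson \ci{goma2} is exactly what the paper does, and you are right that part (1) is the substantive point; but your argument for part (1) has a genuine gap at its key step (c). You take $X'$ to be the locus of points lying on exactly one irreducible component (the paper's $X^o$), and you justify $g(X')=Y'$ by the claim that a point lying on two components cannot have a neighborhood homeomorphic to a neighborhood of a point lying on only one component, because ``the local structure detects how many top-dimensional branches pass through.'' This is false: local topology detects (at best) local analytic branches, not global irreducible components. The node of an irreducible plane nodal cubic lies on exactly one irreducible component, yet it has a neighborhood homeomorphic to a neighborhood of the origin in the union of two lines $\{xy=0\}$, where two distinct components meet. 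For the same reason your parenthetical ``equivalently, the set of points having a connected (or irreducible) local neighborhood'' is not equivalent to lying on one component. So your $X'$ is not given any topological characterization, and the invariance $g(X^o)=Y^o$ --- which is essentially the entire content of part (1) --- is left unproved.

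The paper circumvents precisely this difficulty (this is the point of Williamson's suggestion acknowledged in the text): it defines $X'$ purely topologically, as the set of points admitting a classical open neighborhood all of whose points have neighborhoods homeomorphic to some Euclidean space. With that definition $g(X')=Y'$ is automatic, and the work consists of the sandwich $X^{\rm sm}\subseteq X'\subseteq X^o$ (for the second inclusion one checks that near a point lying on two distinct components there are points that are not locally Euclidean), which shows that the sets $X'\cap X_i$ are open, pairwise disjoint, connected (each contains the dense connected set $X^{\rm sm}\cap X_i$), and have closure $X_i$; hence they are the connected components of $X'$, and matching components under $g$ and taking closures yields $\gamma$ and the homeomorphisms $X_i\simeq Y_{\gamma(i)}$. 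The paper even remarks that its original argument via local irreducibility was more cumbersome; to salvage your route you would need at least a proof that the number of local branches is a topological invariant of the germ, plus a genuinely global argument relating local branches to global components --- the nodal cubic shows this last step cannot be purely local, so as written your proof of (1) does not go through.
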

\begin{proof}
The case when $X$ and $Y$ are  irreducible is proved by M. Goresky and R. MacPherson in 
\ci{goma2}. It is thus clear that it is enough to prove the first statement of the lemma, which is what we do next, leaving some 
elementary details to the reader.

Let $X'$ ($Y'$, resp.)  the be open subset of those points of $X$  ($Y$, resp.) admitting a classical open neighborhood all of whose points
admit a classical open neighborhood homeomorphic to a  Euclidean space of some dimension. 

Since $X'$ is defined topologically, it is clear that $g(X')=Y'$.

We have the following
inclusions of classical open subsets:
\[
X^{\rm sm} \subseteq X' \subseteq X^o \subseteq X, \quad X^{\rm sm}_i \subseteq X'_i \subseteq X^o_i \subseteq X_i, 
\]
where $X^{\rm sm}$ is the set of smooth points of the complex analytic set $X$, and $X^o$ is the complement of the union of all intersections $X_i \cap X_j$, $i,j \in I$, $i\neq j$, and where $-_i$ denotes intersection with the irreducible component $X_i$.

The classical open subsets $X^{\rm sm}_i$ and $X^o_i$ are also Zariski open,  irreducible and connected.
The classical open subset $X'_i$ is connected. The $X'_i$ are the connected components of $X'$. The homeomorphism
$g$ must respect the decomposition of $X'$ and $Y'$ into their connected components, so that we obtain the desired  bijection $\gamma: I \simeq B.$
The classical closure of  $X'_i$ is $X_i$ and since $g$ is a homeomorphism, we must have that $g$ induces a homeomorphism
$X_i \cong Y_{\gamma (i)}$.
\end{proof}

Lemma  \ref{ictop} implies at once the following
 
\begin{cor}\la{iclt}
Let $F,S$ be a varieties, let  $X$ be an $S$-variety,  and let $\tau: X \to F\times S$ be an $S$-homeomorphism. Then, for every $s \in S$, we have canonical
isomorphisms:
\[
(\tau^* pr_F^* IC_F)_{|X_s} = IC_{X_s}
\]
\end{cor}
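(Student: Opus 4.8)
The plan is to deduce Corollary \ref{iclt} directly from Lemma \ref{ictop} by a straightforward application. First I would observe that both $X$ and $F \times S$ are complex analytic sets, that $\tau$ is in particular a homeomorphism of the underlying topological spaces, and that for each fixed $s \in S$ the restriction $\tau_{|X_s}: X_s \to F \times \{s\} \cong F$ is again a homeomorphism of complex analytic sets. So Lemma \ref{ictop}(2), applied to this homeomorphism, gives a canonical isomorphism $IC_{X_s} = \tau_{|X_s}^* IC_{F \times \{s\}} = \tau_{|X_s}^* IC_F$.

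Next I would identify $\tau_{|X_s}^* IC_F$ with $(\tau^* pr_F^* IC_F)_{|X_s}$. This is pure functoriality of pullback along the commuting square whose arrows are $\tau$, $pr_F$, the inclusion $X_s \hookrightarrow X$, and the inclusion $F \times \{s\} \hookrightarrow F \times S$: one has $(pr_F \circ \tau)_{|X_s} = pr_{F|F\times\{s\}} \circ \tau_{|X_s}$, hence $(\tau^* pr_F^* IC_F)_{|X_s} = \tau_{|X_s}^* (pr_{F}^* IC_F)_{|F\times\{s\}}$, and $(pr_F^* IC_F)_{|F \times \{s\}}$ is canonically $IC_F$ since the composite $F \times \{s\} \hookrightarrow F \times S \xrightarrow{pr_F} F$ is the canonical identification $F \times \{s\} \cong F$ and $IC$ pulls back to $IC$ under an isomorphism of analytic sets. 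Combining the two displays gives the asserted canonical isomorphism $(\tau^* pr_F^* IC_F)_{|X_s} = IC_{X_s}$.

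One subtlety worth a remark is the meaning of $IC_F$ in $pr_F^* IC_F$ when $F$ is reducible: with the convention of Lemma \ref{ictop}(2), $IC_F$ is the direct product (equivalently, since the index set of irreducible components is finite, the direct sum) of the intersection complexes of the irreducible components of $F$, and $pr_F^*$ respects this decomposition because $pr_F^{-1}(F_j) = F_j \times S$ are the irreducible components of $F \times S$. Restriction to $X_s$ likewise respects the corresponding decomposition of $X$ via the bijection of irreducible components furnished by Lemma \ref{ictop}(1) applied to $\tau$. The only real point, and hence the main ``obstacle'' — though it is entirely supplied by Lemma \ref{ictop} — is that the canonical isomorphism on intersection complexes is topological, so it survives restriction to fibers of the $S$-homeomorphism $\tau$; once Lemma \ref{ictop} is granted, the corollary is a formal consequence and there is no further difficulty.

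\begin{proof}
Fix $s \in S$. The restriction $\tau_{|X_s}: X_s \to F \times \{s\}$ is a homeomorphism of complex analytic sets, and under the canonical identification $F \times \{s\} \cong F$ the complex $(pr_F^* IC_F)_{|F\times\{s\}}$ is canonically $IC_F$ (with the convention of Lemma \ref{ictop}(2), $pr_F^*$ respecting the decomposition into irreducible components since $pr_F^{-1}(F_j) = F_j \times S$). By Lemma \ref{ictop}(2) applied to $\tau_{|X_s}$, there is a canonical isomorphism $\tau_{|X_s}^* IC_F = IC_{X_s}$. Finally, by functoriality of pullback along the square with arrows $\tau$, $pr_F$, $X_s \hookrightarrow X$, $F \times \{s\} \hookrightarrow F \times S$, we have $(\tau^* pr_F^* IC_F)_{|X_s} = \tau_{|X_s}^*\big((pr_F^* IC_F)_{|F \times \{s\}}\big) = \tau_{|X_s}^* IC_F$. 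Combining the last two identities gives $(\tau^* pr_F^* IC_F)_{|X_s} = IC_{X_s}$, as asserted.
\end{proof}
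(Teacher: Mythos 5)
Your proof is correct and is precisely the argument the paper intends: the paper gives no separate proof, stating only that Lemma \ref{ictop} ``implies at once'' the corollary, and your write-up supplies exactly the expected details (restrict the $S$-homeomorphism $\tau$ to the fiber $X_s$, apply Lemma \ref{ictop}(2) to $\tau_{|X_s}: X_s \to F\times\{s\}\cong F$, and conclude by functoriality of pullback). No gaps.
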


\begin{rmk}\la{whygood}
The definition of intersection cohomology complex for reducible varieties stemming from Lemma \ref{ictop} is reasonable in view of the fact that it satisfies virtually all the usually properties of the usual intersection cohomology complex for irreducible varieties, e.g. purity,
mixed and pure Hodge structures,
Artin vanishing, Lefschetz theorems, relative hard Lefschetz, Hodge Riemann relations, decomposition theorem.  See  
\ci[\S4.6]{decpf2}, for example.
\end{rmk}

The intersection cohomology groups of the  fibers of  $\pi_B(\mo/S,G)$ give rise to locally constant sheaves $I\!H^\bullet_B (\mo/S,G)$
on $S$. In view of the homeomorphism $\Psi (\mo/S,G)$, and of the topological invariance
of intersection cohomology, 
the same applies to the Dolbeault picture, and we get locally constant sheaves $I\!H^\bullet_D (\mo/S,G)$
on $S$.

\begin{fact}\la{wlss} {\rm (\bf The Betti weight filtration is locally constant)}
By the local triviality of $\pi_B (\mo/S,G)$ over $S$, and since the transition automorphisms are compatible with mixed Hodge structures in  (intersection) cohomology,
we see  that  the weight filtration $W$ for the mixed Hodge structure for the intersection cohomology of the fibers
of $\pi_B(\mo/S,G)$ gives rise to locally constant  subsheaves $W_{\star,B} (\mo/S,G) \subseteq I\!H^\bullet_B (\mo/S,G) $ on $S$.
\end{fact}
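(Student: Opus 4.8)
The plan is to reduce the statement to the monodromy description of the local system $I\!H^\bullet_B(\mo/S,G)$ furnished by the local triviality of $\pi_B(\mo/S,G)$ over $S$ recalled above. First I would fix a base point $s_o \in S$, a pointed universal cover $(\w{S},\w{s_o}) \to (S,s_o)$, and recall that $M_B(\mo/S,G)$ is obtained as the quotient of $M_B(\mo_{s_o},G) \times \w{S}$ by the diagonal action of $\pi_1(S,s_o)$, where $\pi_1(S,s_o)$ acts on the first factor through $\comp$-scheme automorphisms of $M_B(\mo_{s_o},G)$. Therefore, as a $\pi_1(S,s_o)$-representation, the stalk of $I\!H^\bullet_B(\mo/S,G)$ at $s_o$ is $I\!H^\bullet(M_B(\mo_{s_o},G),\rat)$ equipped with the monodromy representation obtained by applying the functor $I\!H^\bullet(-,\rat)$ to these automorphisms (where, for possibly reducible fibers, $I\!H^\bullet$ is understood in the sense of Lemma \ref{ictop}).

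The key point is then the following: since the transition automorphisms are morphisms of $\comp$-varieties, each monodromy operator is an isomorphism of mixed Hodge structures on $I\!H^\bullet(M_B(\mo_{s_o},G),\rat)$. This uses that intersection cohomology of complex algebraic varieties carries a functorial mixed Hodge structure (Saito's theory of mixed Hodge modules), which, as recalled in Remark \ref{whygood}, persists for the reducible fibers appearing here; in particular an algebraic automorphism acts by an isomorphism strictly compatible with the weight filtrations $W_\bullet$. Hence, for every $j$, the subspace $W_j I\!H^\bullet(M_B(\mo_{s_o},G),\rat)$ is stable under the monodromy action.

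Finally I would invoke the standard equivalence between local systems on $S$ and finite-dimensional $\pi_1(S,s_o)$-representations: a subspace of the stalk stable under the monodromy action is precisely the stalk of a sub-local-system. Applying this to the monodromy-stable subspaces $W_j I\!H^\bullet(M_B(\mo_{s_o},G),\rat)$ yields the asserted locally constant subsheaves $W_{\star,B}(\mo/S,G) \subseteq I\!H^\bullet_B(\mo/S,G)$ on $S$. The only non-formal ingredient in this argument is the second step, the compatibility of the monodromy operators with mixed Hodge structures in intersection cohomology; everything else is the routine translation among locally trivial fibrations, monodromy representations, and sub-local-systems. Granted the functoriality of $I\!H^\bullet(-,\rat)$ with values in mixed Hodge structures, even this step is immediate, which is why the statement is recorded here as a Fact rather than proved in detail.
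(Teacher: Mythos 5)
Your argument is correct and is essentially the paper's own justification, merely written out in full: the paper likewise reduces to the quotient description $M_B(\mo_{s_o},G)\times\w{S}/\pi_1(S,s_o)$ with monodromy acting by $\comp$-scheme automorphisms, and then invokes exactly the compatibility of these automorphisms with the mixed Hodge structure on intersection cohomology to conclude that the weight subspaces are monodromy-stable, hence sub-local-systems. No gaps; the paper records this as a Fact for the same reason you give at the end.
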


The Dolbeault moduli space is endowed with a natural $\Gm$-action (cf. \ci[p.62, and p.17]{si2}), given by scalar multiplication on the
Higgs field.

The reference in this paragraph is  \ci[p.20-23]{si2}, which deals with the case of $G=GL_n$,  suitably adapted to
an arbitrary reductive $G$.
The Dolbeault moduli space admits the Hitchin $S$-morphism 
\beq\la{hmo}
\xymatrix{
h(\mo/S,G): M_D(\mo/S, G) \ar[r] &  \m{V}(\mo/S,G). 
}
\eeq
Here,  $\m{V}(\mo/S,G)$ is the quasi
projective $S$-variety representing the functor $(S'\to S) \mapsto \oplus_{i=1}^{{\rm rk}G} H^0
(X'/S', {\rm Sym}^{e_i(G)} \Omega^1_{X'/S'})$, where the positive integers $e_i$ are the degrees of the generators
of conjugation-invariant polynomials on the Lie algebra of $G$. The Hitchin morphism assigns to a $G$-principal Higgs bundle the symmetric polynomials
appearing as the coefficients of the  ``characteristic polynomial" of the Higgs field, viewed as sections of the appropriate sheaf. 
The Hitchin morphism is proper over $S$, hence projective over $S$  (the Dolbeault moduli space is 
quasi projective over $S$).
Domain and target  of the Hitchin morphism are endowed with a natural  $\Gm$-action (cf. \ci[p.62]{si2}), which covers the trivial action on $S$.
The $\Gm$-action on the target is contracting. 
The Hitchin morphism is $\Gm$-equivariant. 

We observe that the  mixed Hodge structure  on the intersection cohomology groups of each Dolbeault moduli space is pure
(the starting point is the $\Gm$-equivariance and the contracting action; then one can imitate the proof of     \ci[Lemma 6.1.1 and references therein, and the proof of Thm 2.4.1]{dehali}).
We do not need this fact, except to point out that  it is in sharp contrast with the  expected (known in some cases) non purity of the corresponding intersection cohomology groups of each Betti moduli space.

\begin{??}\la{4r4r}
{\rm ({\bf  Is $P=W$ in the non-Abelian Hodge theory in arbitrary dimensions?})}
Let $X$ be a connected  smooth projective variety. 
Then for the weight filtration $W_B (X,G)$, do we have  $W_{B,2\star +1} (X,G) = W_{2 \star} (X.G) \subseteq I\!H^{\bullet} (M_B(X,G))$? 
Via the  Non Abelian Hodge Theorem isomorphism $\Psi_*$, do we have that
$\Psi_* W_{B,2\star} (X,G) =  P^h_{D,\star}(X,G)$, where $P^h_D(X,G)$ is the perverse Leray filtration for the Hitchin morphism (suitably normalized, so that it  ``starts at zero")? 
\end{??}

\begin{rmk}\la{p=wdd} Actually, the $P=W$ conjecture, which is due to M.A. de Cataldo, T. Hausel and L. Migliorini,
is concerned with a twisted version of the Betti/Dolbeault moduli spaces for curves of genus $g \geq 2$.
The paper \ci{dhm} proves the validity of the conjecture in this  twisted case  
when $X$ is a curve and $G=GL_2, SL_2$ and $PGL_2$. In this twisted case over a curve of genus $g \geq 2$, the moduli spaces for  $G=GL_n, SL_n$ are nonsingular, and for   $G=PGL_n$ they are orbifolds;  Theorem \ref{ewq} applies to this situation.
\end{rmk}

In the next section, we prove Theorem \ref{ewq}, to the effect that   the perverse filtration gives rise to locally constant subsheaves  subsheaves $P^h_{D,\star}  (\mo/S,G) \subseteq I\!H^\bullet (\mo/S,G)$ on $S$.

We need the following

\begin{lm}\la{vaicdm}
Let things be as in Theorem \ref{ewq}.
Let $F$ be the intersection complex of the Dolbeault moduli space  $M_D (\mo/S,G)$. 
Then
\beq\la{zzz}
\phi F =0,
\eeq
the  specialization morphism is defined and it is an isomorphism:
\beq\la{zz}
\xymatrix{
{\rm sp}: R\Gamma (\mo_s, i^* F) \ar[r]^-\simeq  &  R\Gamma (\mo_t, t^* F).
}
\eeq
\end{lm}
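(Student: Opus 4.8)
The plan is to deduce the statement from Theorem \ref{finit}, applied to the morphism $f = h(\mo/S,G) : M_D(\mo/S,G) \to \m{V}(\mo/S,G)$, which is proper over $S$ (indeed projective), with $v_Y$ the structural morphism $\m{V}(\mo/S,G) \to S$, and with $F = IC_{M_D(\mo/S,G)}$. The only genuinely new input needed is the vanishing \eqref{zzz}, i.e. $\phi F = 0$; once this is established, the specialization morphism for $R\Gamma(\mo_s, i^*F)$ exists and is an isomorphism by the argument in Theorem \ref{finit}, provided we are in one of the three cases there. Since $v_Y = \pi_D$ composed with $h$ is not proper, case (i) does not directly apply to $v_Y$; instead I would use case (iii) (or (ii)): $F$ is semisimple (it is an intersection complex, hence a simple perverse sheaf up to shift on each irreducible component), and $\phi F = 0$ together with local constancy of $v_* f_* F = R\Gamma(M_D(\mo/S,G)/S, IC)$-type sheaf. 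The latter local constancy is exactly the content of the discussion preceding the Lemma: by Fact \ref{dltr} the Dolbeault moduli space is topologically locally trivial over $S$, so by Corollary \ref{iclt} and the topological invariance of intersection cohomology (Lemma \ref{ictop}), the sheaves $R^k (\pi_D)_* F$ are locally constant on $S$; since $v_* f_* F \simeq (\pi_D)_* F$, this is precisely the hypothesis of case (iii).

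The main obstacle is therefore proving \eqref{zzz}: $\phi_{M_D(\mo/S,G)_\disk} F = 0$, where $\phi$ is taken with respect to $v_{M_D} : M_D(\mo/S,G) \to S$ restricted over the disk $\disk$. The key point is that $F$ has locally constant cohomology sheaves along the fibers of $\pi_D$ over $\disk$, after pulling back a local trivialization. Precisely: by Fact \ref{dltr}, shrinking $\disk$ if necessary, there is a homeomorphism $\tau : M_D(\mo/S,G)_\disk \simeq M_D(\mo_s,G) \times \disk$ over $\disk$; by Corollary \ref{iclt}, $F_\disk \simeq \tau^* \operatorname{pr}^* IC_{M_D(\mo_s,G)}$, which has locally constant cohomology sheaves in the $\disk$-direction in the strong sense that it is (topologically) pulled back from the first factor. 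Then $\phi F = 0$ follows from Fact \ref{psp} (vanishing of the vanishing cycle functor for a complex whose cohomology sheaves are locally constant in the disk direction): more carefully, one applies $\phi$ fiberwise, noting that for a product family $Z \times \disk \to \disk$ with a complex pulled back from $Z$, the vanishing cycle complex vanishes, since $\phi$ of a constant-in-$\disk$ complex is zero. One has to be slightly careful that $\phi$ is an algebraic-geometric functor while $\tau$ is only a homeomorphism; this is handled by the topological nature of nearby/vanishing cycles over a disk, or alternatively by the remark in Fact \ref{gh1}/Verdier's specialization functor that the relevant morphism $i^*[-1]F \to i^![1]F$ depends only on the closed embedding $M_D(\mo_s,G) \hookrightarrow M_D(\mo/S,G)$, which by the local triviality is identified with the inclusion of a fiber in a topological product, for which specialization is manifestly an isomorphism.

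Having obtained \eqref{zzz}, I would then verify the remaining hypotheses of Theorem \ref{finit}(iii) explicitly: $f = h(\mo/S,G)$ is proper (stated in \S\ref{snaht}); $F$ semisimple (clear); $\phi F = 0$ (just proved); and $v_* f_* F$ has locally constant cohomology sheaves on $S$ (equivalently $R(\pi_D)_* F$ does, which follows from topological local triviality of $\pi_D$ and topological invariance of $IC$ as recalled above, via Corollary \ref{iclt} and Lemma \ref{ictop}). Applying Theorem \ref{finit} then yields that the specialization morphism
\[
{\rm sp} : R\Gamma(\mo_s, i^*F) \longrightarrow R\Gamma(\mo_t, t^*F)
\]
is well-defined and an isomorphism (in fact a filtered isomorphism for the perverse Leray filtrations $P^{h_s}$, $P^{h_t}$, which is what will be used in the proof of Theorem \ref{ewq}), giving both \eqref{zz} and, a fortiori, the unfiltered statement. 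I expect the write-up to be short, with essentially all the work concentrated in the identification, via the topological trivialization, that makes Fact \ref{psp} applicable for \eqref{zzz}.
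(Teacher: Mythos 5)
Your proposal is correct, and its core coincides with the paper's proof: the vanishing \eqref{zzz} is obtained exactly as in the paper, by trivializing $\pi_D$ topologically over the disk $\disk$ (Fact \ref{dltr} via non-abelian Hodge theory), observing via topological invariance of intersection cohomology (Lemma \ref{ictop}, Corollary \ref{iclt}) that $F$ restricted over $\disk$ is pulled back from the special fiber via the projection, and using the topological nature of nearby/vanishing cycles over a disk to conclude $\phi F=0$. The only divergence is in how you obtain \eqref{zz}: the paper deduces the existence and bijectivity of the specialization morphism directly from the same product description (the complex being pulled back from the fiber over a contractible disk, restriction to both the special and the general fiber are isomorphisms), whereas you route it through Theorem \ref{finit}(iii), verifying its hypotheses ($h$ proper over $S$, $F$ semisimple, $\phi F=0$, and local constancy of the cohomology sheaves of $\rho_* h_* F=\pi_{D*}F$ from the trivialization). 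This detour is not circular, since Theorem \ref{finit} is independent of the Lemma, and it even yields the stronger filtered statement; the trade-off is that you front-load the verification that the paper postpones to the proof of Theorem \ref{ewq} (where, with Lemma \ref{vaicdm} in hand, it invokes Theorem \ref{finit}(ii) via the decomposition theorem), so your version of the Lemma is heavier than needed but entirely sound.
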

\begin{proof}
The Betti  moduli space is topologically locally trivial over any disk contained in $S$. By the Non Abelian Hodge Theorem,
the Dolbeault moduli space is also  topologically locally trivial over our disk  $\disk$. Let $M_D(\mo_s,G) \times \disk \cong M_D(X_\disk/\disk,G)$ be any topological  trivialization. Then the intersection complex on $M_D(X_\disk/\disk,G)$ is the pull-back
of the intersection complex of $M_D(\mo_s,G)$ via the first projection associated with the chosen trivialization. Both the conclusions of the lemma follow.

\end{proof}

$\,$
\subsection{Proof of Theorem \ref{ewq} on the Hitchin morphism and specialization}\la{gigi}$\;$

\begin{proof}
We denote  the Hitchin morphism  (\ref{hmo})  simply by  $h:M\to \m{V}$,  we denote the structural $S$-morphisms
by $\pi:M \to S$ and $\rho: \m{V} \to S$.
Let $\m{IC}_M$ be the intersection complex of the Dolbeault moduli space $M$. 

Our first goal is to verify that we are now in the situation of Theorem \ref{finit}.(ii).

We set $(X,Y,S, f, v,F)$ to be $(M,\m{V}, S, h, \rho, \m{IC}_M)$.

The requirement $\phi F=0$ is met by Lemma \ref{vaicdm}.

As in the proof of Lemma \ref{vaicdm}, the intersection complex $\m{IC}_M$ is, locally over the nonsingular $S$, the pull-back from 
of the intersection complex of a fiber. It follows that
the direct image sheaves
$R^\bullet \pi_* \m{IC}_M = I\!H^\bullet_D(\mo/S,G)$
are locally constant on $S$, with stalks the intersection cohomology groups of the fibers of $\pi:M\to S$.

By the decomposition theorem \ci{bbd}, applied to the projective $h$ and the simple perverse sheaf   $\m{IC}_M$,
we have that the truncated $\ptd{\bullet} h_* \m{IC}_M$ are direct summands of the direct image $h_*F$.
It follows that  $\rho_* \ptd{\bullet}  h_* \m{IC}_M$ are direct summands of $\pi_* \m{IC}_M= \rho_* h_* \m{IC}_M$ in $D(S)$.

By combining the two last paragraphs, we have that the $\rho_* \ptd{\bullet}  h_* \m{IC}_M$ have locally constant cohomology sheaves.

We can thus apply Theorem \ref{finit}.(ii) (or its weaker variant (iii)), the conclusion of which is that the specialization morphism
$R\Gamma (M_s, i^* \m{IC}_M) \to R\Gamma (M_t, t^* \m{IC}_M)$ is defined and it is  a filtered isomorphism 
for the respective perverse Leray filtrations. 

Since, as it has been observed above,   the intersection complex
of $M$ restricts to the intersection complexes of the fibers $M_s$ and $M_t$, the 
 first assertion of Theorem \ref{ewq} follows.

The second assertion, i.e. the independence of $s \in S$ on a connected $S$, follows easily: pick any two points
$s,s'$ and a suitably general point $t \in S$. Then apply what we have proved to the pairs $(s,t)$ and $(s',t)$.
\end{proof}

\end{document}